\newtheorem{thm}{Theorem}
\newtheorem{rem}[thm]{Remark}
\newtheorem{defn}[thm]{Definition}
\newtheorem{lem}[thm]{Lemma}
\def\BibTeX{{\rm B\kern-.05em{\sc i\kern-.025em b}\kern-.08em
    T\kern-.1667em\lower.7ex\hbox{E}\kern-.125emX}}
\begin{document}
\title{Temporal Parallelisation of Dynamic Programming and Linear Quadratic Control}
\author{Simo S\"arkk\"a,~\IEEEmembership{Senior Member,~IEEE,}
	\'Angel F. Garc\'ia-Fern\'andez
\thanks{S. S\"arkk\"a is with the Department
	of Electrical Engineering and Automation, Aalto University, 02150 Espoo, Finland (email: simo.sarkka@aalto.fi).}
\thanks{A. F. Garc\'ia-Fern\'andez is with the Department of Electrical Engineering and Electronics, University of Liverpool, Liverpool L69 3GJ, United Kingdom, and also with the ARIES Research Centre, Universidad Antonio de Nebrija, Madrid, Spain (email: angel.garcia-fernandez@liverpool.ac.uk). }}

\maketitle

\begin{abstract} This paper proposes a general formulation for temporal parallelisation of dynamic programming for optimal control problems. We derive the elements and associative operators to be able to use parallel scans to solve these problems with logarithmic time complexity rather than linear time complexity. We apply this methodology to problems with finite state and control spaces, linear quadratic tracking control problems, and to a class of nonlinear control problems. The computational benefits of the parallel methods are demonstrated via numerical simulations run on a graphics processing unit.
\end{abstract}

\begin{IEEEkeywords}
Associative operator, dynamic programming, multi-core processing, optimal control, parallel computing, graphics processing unit
\end{IEEEkeywords}

\section{Introduction}
\label{sec:introduction}
\IEEEPARstart{O}{ptimal} control theory (see, e.g.,  \cite{Stengel_book94, Lewis+Syrmos:1995, Bertsekas:2005}) is concerned with designing control signals to steer a system such that a given cost function is minimised, or equivalently, a performance measure is maximised. The system can be, for example, an airplane or autonomous vehicle which is steered to follow a given trajectory, an inventory system, a chemical reaction, or a mobile robot\cite{Stengel_book94,Biggs:2009,Komaee:2014,Zhao:2021,Sutton+Barto:2018}. 

Dynamic programming, in the form first introduced by Bellman 1950's, is a general method for determining feedback laws for optimal control and other sequential decision problems \cite{Bellman:1957,Bellman+Dreyfus:1962,Bertsekas:2005,Pakniyat:2017}, and it also forms the basis of reinforcement learning \cite{Sutton+Barto:2018}, which is a subfield of machine learning. The classic dynamic programming algorithm is a sequential procedure that proceeds backwards from the final time step to the initial time step, and determines the value (cost-to-go) function as well as the optimal control law in time complexity of $O(T)$, where $T$ is the number of time steps. The algorithm is optimal in the sense that no sequential algorithm that processes all the $T$ time steps can have a time-complexity less than $O(T)$.

However, the complexity $O(T)$ is only optimal in a computer with one single-core central processing unit (CPU). Nowadays, even general-purpose computers typically have multi-core CPUs with tens of cores and higher-end computers can have hundreds of them. Furthermore, graphics processing units (GPUs) have become common accessories of general-purpose computers and current high-end GPUs can have tens of thousands of computational cores that can be used to parallelise computations and lower the time-complexity. 

Dynamic programming algorithms that parallelise computations at each time step, but operate sequentially, are provided in \cite{Gengler:1996, Dormido-Canto05} for discrete states, and in \cite{Frison13} for the Riccati recursion in linear quadratic problems. Another approach to speed up computations for model predictive control (MPC) in linear quadratic problems is partial condensing \cite{Axehill15, Frison16}, which is based on splitting the problem into temporal blocks and eliminating the intermediate states algebraically. The required block-conversion can be done in parallel and the resulting modified linear quadratic problem can be solved using parallel methods such as in \cite{Frison13}. However, the complexity of the resulting algorithm is still linear in time.

The previous dynamic programming algorithms have linear time-complexity $O(T)$, but there are some approaches in literature to lower this complexity by using parallelisation across time. One idea applied in the context of an allocation process can be found in \cite[Sec. I.30]{Bellman+Dreyfus:1962}, where the time interval is divided into two, and the two problems are solved in parallel. Various forms of parallel algorithms for dynamic programming with discrete states are given in \cite{Casti73}. Reference \cite{Wright:1991} presents a partitioned dynamic programming suitable for parallelisation for linear quadratic control problems, though it has the disadvantage that some required inverse matrices may not exist. An iterated method for linear quadratic control problems, with constraints, in which each step can be parallelised is proposed in \cite{Frash15}, though it requires positive definite matrices in the cost function and may require regularisation. An algorithm to approximately solve an optimal control problem by solving different subproblems with partially overlapping time windows is provided in \cite{shin2019parallel}, and an approximate parallel algorithm for linear MPC is given in \cite{Jiang:2021}. Reference \cite{Calvet:1995} provides combination rules to separate the dynamic programming algorithm into different subproblems across the temporal domain. These combination rules are the foundation for temporal parallelisation.

The main contribution of this paper is to present a parallel formulation of dynamic programming that is exact and has a time complexity $O(\log T)$. None of the previous works achieve these two aspects simultaneously. The central idea is to reformulate dynamic programming in terms of associative operators, which enable the use of parallel scan algorithms \cite{Blelloch:1989,Blelloch:1990} to parallelise the algorithm. The resulting algorithm has a span-complexity of $O(\log T)$, which translates into a time-complexity of $O(\log T)$ with a large enough number of computational cores. The algorithm can therefore speed up the computations significantly for long time horizons. 

In this paper, we first provide the general formulation to parallelise dynamic programming by defining conditional value functions between two different time steps and combining them via the rule in  \cite{Calvet:1995}. We also show how to obtain the optimal control laws and resulting trajectories making use of parallel computation. Then, we explain how this general methodology can be directly applied to problems with finite state and control spaces. The second contribution of this paper is to specialise the methodology to linear quadratic optimal control problems, that is, to linear quadratic trackers (LQTs). The parallel LQT formulation is not straightforward, as it requires the propagation of the dual function \cite{Boyd_book04} associated with the conditional value function to avoid numerical problems. Our third contribution is to extend the parallel LQT algorithm to approximately solve certain nonlinear control problems by iterated linearisations, as in \cite{li2004iterative}. Finally, we have implemented these algorithms in TensorFlow \cite{Abadi_et_al:2015}, which enables parallel computations on GPUs, to experimentally show that the parallel algorithms provide a significant speed-up also in practice.

The present approach is closely related to the temporal parallelisation of Bayesian smoothers and hidden Markov model inference recently considered in \cite{Sarkka:2021,yaghoobi2021parallel,Hassan:2021}. These approaches use a similar scan-algorithm-based parallelisation in the context of state-estimation problems. The combination rule is also related to so-called max-plus algebras for dynamic programming which have been considered, for example, in \cite{Dower:2015,Zhang:2015,Zhang:2015b,Xu:2019}. 

The structure of the paper is the following. In Section~\ref{sec:background} we provide a brief background on dynamic programming and parallel computing, in Section~\ref{sec:Parallel_optimal_control} we provide the parallel methods to general and finite-state problems, in Section~\ref{sec:Parallel_LQT} we consider the parallel solution to LQT problems, in Section~\ref{sec:implementation}, we discuss some practical implementation aspects and computational complexity, in Section~\ref{sec:experimental} we experimentally illustrate the performance of the methods on a GPU platform, and finally we conclude the article in Section~\ref{sec:conclusion}. 

\section{Background} \label{sec:background}

We provide a brief background on optimal deterministic control and its dynamic programming solution in Section \ref{subsec:Deterministic_control}, the LQT case in Section \ref{subsec:LQT} and the parallel scan algorithm in Section \ref{subsec:Parallel_scan}.

\subsection{Deterministic control problem} \label{subsec:Deterministic_control}

We consider a deterministic control problem that consists of a difference equation and a cost function of the form \cite{LaValle_book06}
\begin{equation}
\begin{split}
    x_{k+1} &= f_k(x_k, u_k), \\
  C[u_{S:T-1}] &= \ell_T(x_T) + \sum_{n=S}^{T-1} \ell_n(x_n,u_n),
\end{split}
\label{eq:det_problem}
\end{equation}
where, for $k=S,\ldots,T$, $x_k$ is the state (typically $x_k \in \mathbb{R}^{n_x}$), $f_k(\cdot)$ is the function that models the state dynamics at time step $k$, $u_{S:T-1}=(u_S,\ldots,u_{T-1})$ is the control/decision sequence (typically $u_k \in \mathbb{R}^{n_u}$ with $n_u\leq n_x$), and $\ell_k(\cdot)$ is a lower bounded function that indicates the cost at time step $k$. The initial state $x_S$ is known. The aim is now to find a feedback control law or policy $u_k(x_k)$ such that if at step $k$ the state is $x_k$, the cost function $C[u_{S:T-1}]$ for the steps from $S$ to $T$ is minimized with the sequence $u_S(x_S), \ldots, u_T(x_T)$.

In Bellman's dynamic programming \cite{Bellman:1957,Bellman+Dreyfus:1962,Bertsekas:2005} the idea is to form a cost-to-go or value function $V_k(x_k)$ which gives the cost of the trajectory when we follow the optimal decisions for the remaining steps up to $T$ starting from state $x_k$. It can be shown \cite{Bellman:1957} that the value function admits the recursion
\begin{equation} \label{eq:Value_function}
\begin{split}
  V_k(x_k) = \min_{u_k} \left\{
    \ell_k(x_k,u_k) + V_{k+1}(f_k(x_k,u_k)) \right\}
\end{split}
\end{equation}
with $V_T(x_T) = \ell_T(x_T)$, which determines the optimal control law via
\begin{equation}
\begin{split}
  u_k(x_k) = \arg \min_{u_k} \left\{
    \ell_k(x_k,u_k) + V_{k+1}(f_k(x_k,u_k)) \right\}.
\end{split}
\label{eq:det_control}
\end{equation}

Given the control law \eqref{eq:det_control} for all time steps, we can compute the optimal trajectory from time steps $S+1$ to $T$, which is denoted as $(x^*_{S+1},\ldots,x^*_T)$, by an additional forward pass starting at $x^*_S=x_S$ and 
\begin{equation}\label{eq:optimal_trajectory}
x^*_{k+1} = f_k(x_k^*, u_k(x_k^*)) = f^*_k(x_k^*).
\end{equation}

\subsection{Linear quadratic tracker}\label{subsec:LQT}

The LQT problem \cite{Lewis+Syrmos:1995} is the solution to a linear quadratic control problem 
of the form 
\begin{equation}
\begin{split}
x_{k+1} &= F_kx_k+c_k+L_ku_k, \\
\ell_T(x_T) &=  \frac{1}{2}(H_Tx_T-r_T)^\top X_T(H_Tx_T-r_T), \\
\ell_n(x_n,u_n) &= \frac{1}{2}(H_nx_n-r_n)^\top  X_n(H_nx_n-r_n)
  + \frac{1}{2} u_n^\top U_nu_n, \label{eq:LQT_problem}
\end{split}
\end{equation}
for $n = S,\ldots, T-1$. We assume that  $X_n$ and $U_n$ are symmetric matrices such that $X_n\geq0$, $U_n>0$.

In this setting, the objective is that a linear combination of the states $H_kx_k$ follows a reference trajectory $r_k$ from time step $S$ to $T$. The linear quadratic regulator is a special case of the LQT problem by setting $r_k=0$, $c_k=0$, and $H_k = I \: \forall k$.

In this case, the value function is
\begin{equation}
V_k(x_k) = \mathrm{z}+ \frac{1}{2}x_k^\top S_k x_k - v_k^\top x_k, \label{eq:V_k_LQT}
\end{equation}
where  $v_k$ is an $n_x \times 1$ vector and $S_k$ is an $n_x \times n_x$ symmetric matrix and, throughout the paper, we use $\mathrm{z}$ to denote an undetermined constant that does not affect the calculations. The parameters $v_k$ and $S_k$ can be obtained recursively backwards. Starting with $v_T=H_T^\top X_Tr_N$ and $S_T=H_T^\top X_TH_T$, we obtain
\begin{align}
v_{k}&=\left(F_k-L_kK_{k}\right)^{\top}\left(v_{k+1}-S_{k+1}c_{k}\right)+H_k^{\top}X_kr_{k}, \label{eq:v_k_recursion}\\
S_{k}&=F_k^{\top}S_{k+1}(F_k-L_kK_{k})+H_k^{\top}X_kH_k, \label{eq:S_k_recursion}
\end{align}
where
\begin{equation}
K_{k}=	\left(L_k^{\top}S_{k+1}L_k+U_k\right)^{-1}L_k^{\top}S_{k+1}F_k
\end{equation}
and $k = S,\ldots, T-1$. 

The optimal control law is
\begin{equation}\label{Control_LQT}
u_{k}=	-K_{k}x_{k}+K_{k}^{v}v_{k+1}-K_{k}^{c}c_{k},
\end{equation}
where
\begin{align}
K_{k}^{v}&=\left(L_k^{\top}S_{k+1}L_k+U_k\right)^{-1}L_k^{\top},\\
K_{k}^{c}&=\left(L_k^{\top}S_{k+1}L_k+U_k\right)^{-1}L_k^{\top}S_{k+1}.
\end{align}

The optimal trajectory resulting from applying the optimal control law \eqref{Control_LQT} can be computed with an additional forward pass \eqref{eq:optimal_trajectory} starting at $x_S$ and with control law \eqref{Control_LQT}.

It should be noted that the derivation of LQT in \cite{Lewis+Syrmos:1995} does not include time-varying matrices or parameter $c_k$ in the problem formulation \eqref{eq:LQT_problem}, but it is straightforward to include these. It is also possible to use the LQT solution as a basis for approximate non-linear control by linearizing the system along a nominal trajectory (see \cite{Stengel_book94,Maybeck:1982b,li2004iterative} and Sec.~\ref{sec:nonlinear}). 

\subsection{Associative operators and parallel computing} \label{subsec:Parallel_scan}

Parallel computing (see, e.g., \cite{Rauber:2013,Barlas:2015}) refers to programming and algorithm design methods that take the availability of multiple computational cores into account. When some parts of the problem can be solved independently, then those parts can be solved in parallel to reduce the  computational time. The more parts we can solve in parallel, the more speed-up we get. 

Sequential problems which at first glance do not seem to be parallelisable can often be parallelised using so called parallel scan or all-prefix-sums algorithms \cite{Blelloch:1989,Blelloch:1990}. Given a sequence of elements $a_1,\ldots,a_T$ and an associative operator $\otimes$ defined on them, such as summation, multiplication, or minimisation, the parallel scan algorithm computes the all-prefix-sums operation which returns the values $s_1,\ldots, s_T$ such that
\begin{equation}
\begin{split}´
s_1 &= a_1, \\
s_2 &= a_1 \otimes a_2, \\
&\hdots \\
s_T &= a_1 \otimes a_3 \otimes \cdots \otimes a_T,
\end{split}
\end{equation}
in $O(\log T)$ time. The key aspect is that because the operator $\otimes$ is associative, we can rearrange the computations in various ways which generate independent sub-problems, for example,
\begin{equation}
  ((a_1 \otimes a_2) \otimes a_3) \otimes a_4
  = (a_1 \otimes a_2) \otimes (a_3 \otimes a_4),
\end{equation}
and, by a suitable combination of the partial solutions, we can obtain the result in $O(\log T)$ parallel steps. The specific combination requires an up-sweep and a down-sweep on a binary tree of computations \cite{Blelloch:1990}. A pseudocode is given in Algorithm \ref{alg:Parallel_scan}. Clearly, the prefix sums can also be computed in parallel in the backward direction $(a_1 \otimes \cdots \otimes a_T, \ldots ,a_{T-1} \otimes a_T, a_T)$. 

It should be noted that while parallel scans significantly lower the wall-clock time to compute all-prefix-sums, they have the drawback that the number of total computations is higher than in the sequential algorithm \cite{Blelloch:1990}. This implies that they require higher energy, which may not be suitable for small-scale mobile systems.

\begin{algorithm}
	\textbf{Input:} The elements $a_k$ for $k=1,\ldots,T$ and an associative operator $\otimes$.\\
	\textbf{Output:} The all prefix sums are returned in $a_k$ for $k=1,\ldots,T$.
	\begin{algorithmic}[1] 
		\STATE // Save the input:
		\FOR[Compute in parallel]{$i\gets1$ \textbf{to} $T$}
		\STATE $b_i \gets a_i$
		\ENDFOR
		\STATE // Up-sweep:
		\FOR{$d\gets0$ \textbf{to} $\log_2 T - 1$} 
		\FOR[Compute in parallel]{$i\gets0$ \textbf{to} $T-1$ \textbf{by} $2^{d+1}$}
		\STATE $j \gets i + 2^d$
		\STATE $k \gets i + 2^{d+1}$
		\STATE $a_k \gets a_j \otimes a_k$
		\ENDFOR
		\ENDFOR
		\STATE $a_T \leftarrow 0$ \COMMENT{Here, $0$ is the neutral element for $\otimes$}
		\STATE // Down-sweep:
		\FOR{$d\gets\log_2 T - 1$ \textbf{to} $0$}
		\FOR[Compute in parallel]{$i\gets0$ \textbf{to} $T-1$ \textbf{by} $2^{d+1}$}
		\STATE $j \gets i + 2^d$
		\STATE $k \gets i + 2^{d+1}$
		\STATE $t \gets a_j$
		\STATE $a_j \gets a_k$
		\STATE $a_k \gets a_k \otimes t$
		\ENDFOR
		\ENDFOR
		\STATE // Final pass:
		\FOR[Compute in parallel]{$i\gets1$ \textbf{to} $T$}
		\STATE $a_i \gets a_i \otimes b_i$
		\ENDFOR
	\end{algorithmic}
	\caption{Parallel-scan algorithm. The algorithm in this form assumes that $T$ is a power of $2$, but it can easily be generalized to an arbitrary $T$.}
	\label{alg:Parallel_scan}
\end{algorithm}

\section{Parallel optimal control}\label{sec:Parallel_optimal_control}

In this section, we start by defining conditional value functions and their combination rules (Section \ref{subsec:Cond_value_func}) and then we use them to define the associative operators and elements for parallelisation (Section \ref{subsec:Assoc_operator}). We also derive the parallel solution of the resulting optimal trajectory (Section \ref{subsec:Optimal_trajectory}), and finally, we discuss the case where the state space and controls take values in finite sets (Section \ref{subsec:finite_state}).

\subsection{Conditional value functions and combination rules} \label{subsec:Cond_value_func}

In this section, we present the conditional value functions and their combination rules, which are required to design the parallel algorithms.

\begin{defn}[Conditional value function]\label{Defn:conditional_value_function}
The conditional value function $V_{k \to i}(x_k, x_i)$ is the cost of the optimal trajectory starting from $x_k$ and ending at $x_i$, that is
\begin{equation}
\begin{split}
  V_{k\to i}(x_{k},x_{i})&=\min_{u_{k:i-1}}\sum_{n=k}^{i-1}\ell_{n}(x_{n},u_{n})
\end{split}
\label{eq:vki-definition}
\end{equation}
subject to
\begin{equation}\label{conditional_value_condition}
x_{n}=f_{n-1}(x_{n-1},u_{n-1})	\quad\forall n\in\{k+1,...,i\}.
\end{equation}
If there is no path connecting $x_k$ and $x_i$, then the constraint \eqref{conditional_value_condition} cannot be met and $V_{k\to i}(x_{k},x_{i})=\infty$.
\end{defn}

The combination rule for conditional value functions is provided in the following theorem.

\begin{thm} \label{the:v-comb}
The recursions for the value functions and conditional value functions can be written as
\begin{equation}
\begin{split}
  V_{k \to i}(x_k,x_i) = \min_{x_j} \left\{ V_{k \to j}(x_k,x_j) + V_{j \to i}(x_j,x_i) \right\},
\end{split}
\label{eq:vki-comb}
\end{equation}
for $k < j < i \le T$ and
\begin{equation}
\begin{split}
  V_k(x_k) = \min_{x_i} \left\{ V_{k \to i}(x_k,x_i) + V_i(x_i) \right\} ,
\end{split}
\label{eq:vk-comb}
\end{equation}
for $k < i \le T$.
\end{thm}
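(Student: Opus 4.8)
The plan is to derive both identities directly from the definition \eqref{eq:vki-definition} of the conditional value function by exploiting the additive structure of the running cost together with Bellman's principle of optimality. The crucial observation is that the sum $\sum_{n=k}^{i-1}\ell_n(x_n,u_n)$ appearing in \eqref{eq:vki-definition} splits, for any intermediate index $j$ with $k<j<i$, into two blocks $\sum_{n=k}^{j-1}\ell_n(x_n,u_n)$ and $\sum_{n=j}^{i-1}\ell_n(x_n,u_n)$. The first block depends only on the sub-sequence $u_{k:j-1}$ and the states it generates, while the second depends only on $u_{j:i-1}$; the two blocks are coupled solely through the shared value of the state $x_j$ at the splitting time.

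First I would prove \eqref{eq:vki-comb}. Starting from $V_{k\to i}(x_k,x_i)=\min_{u_{k:i-1}}\bigl[\sum_{n=k}^{j-1}\ell_n+\sum_{n=j}^{i-1}\ell_n\bigr]$ subject to \eqref{conditional_value_condition}, I would introduce the intermediate state $x_j=f_{j-1}(x_{j-1},u_{j-1})$ as an explicit optimization variable. Because every feasible trajectory from $x_k$ to $x_i$ visits exactly one state $x_j$ at time $j$, and conversely any concatenation of a feasible $x_k\to x_j$ trajectory with a feasible $x_j\to x_i$ trajectory yields a feasible $x_k\to x_i$ trajectory with additive cost, the joint minimization over $u_{k:i-1}$ can be rewritten as an outer minimization over $x_j$ and two inner minimizations that are now decoupled. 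The inner minimizations are precisely $V_{k\to j}(x_k,x_j)$ and $V_{j\to i}(x_j,x_i)$ by Definition \ref{Defn:conditional_value_function}, which gives \eqref{eq:vki-comb}. The $V_{k\to i}=\infty$ convention for disconnected endpoints is consistent here, since an infinite value in either factor forces the sum to be infinite.

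The identity \eqref{eq:vk-comb} would follow the same pattern, splitting the cost-to-go at time $i$ instead. Writing the value function as $V_k(x_k)=\min_{u_{k:T-1}}\bigl[\ell_T(x_T)+\sum_{n=k}^{T-1}\ell_n\bigr]$ and separating the running cost into the block $\sum_{n=k}^{i-1}\ell_n$ and the remainder $\ell_T(x_T)+\sum_{n=i}^{T-1}\ell_n$, the same decoupling over the intermediate state $x_i$ identifies the first block with $V_{k\to i}(x_k,x_i)$ and the remainder with $V_i(x_i)$, where I would use $V_T(x_T)=\ell_T(x_T)$ to cover the boundary case $i=T$.

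The main obstacle I anticipate is making the interchange-of-minima step fully rigorous: I must argue that minimizing over the full control sequence is equivalent to first fixing the intermediate state and then minimizing the two blocks independently. This is essentially Bellman's principle of optimality, and the care needed is to establish the two-sided inequality, namely that any global trajectory can be decomposed (giving the $\ge$ direction) and that any pair of sub-trajectories can be concatenated (giving the $\le$ direction), while tracking the feasibility constraints \eqref{conditional_value_condition} and the possibility of infinite values.
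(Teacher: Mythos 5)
Your proposal is correct and takes essentially the same route as the paper's own proof: both split the additive cost at the intermediate time, interchange the minimisations over the two decoupled control blocks linked only through the shared intermediate state, and identify the blocks with $V_{k\to j}$ and $V_{j\to i}$ (resp.\ $V_{k\to i}$ and $V_i$, using $V_T(x_T)=\ell_T(x_T)$ for the boundary case $i=T$). The only differences are cosmetic—you prove \eqref{eq:vki-comb} before \eqref{eq:vk-comb} and are more explicit about the two-sided concatenation/decomposition argument and the handling of infinite values, which the paper leaves implicit in its chain of equalities.
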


\begin{proof} 
See Appendix \ref{sec:Appendix}.
\end{proof} 

As part of the minimisation in \eqref{eq:vk-comb}, we also get the minimizing state $x_i$. Due to the principle of optimality, this value is the state at time step $i$ that is on the optimal trajectory from $x_k$ until time $T$. Similarly, the argument of minimisation $x_j$ in \eqref{eq:vki-comb} is part of the optimal trajectory from $x_k$ to $x_i$.

\subsection{Associative operator for value functions} \label{subsec:Assoc_operator}

The associative element $a$ of the parallel scan algorithm is defined to be a conditional value function $V_a(\cdot,\cdot):\mathbb{R}^{n_x} \times \mathbb{R}^{n_x}\rightarrow \mathbb{R}$ such that
\begin{equation}
\begin{split}
  a = V_a(x,y). \label{eq:element_conditional_value}
\end{split}
\end{equation}
The combination rule for two elements $a = V_a(x,y)$ and $b = V_b(x,y)$ is then given as follows.
\begin{defn} \label{def:gen-op}
Given elements $a$ and $b$ of the form \eqref{eq:element_conditional_value}, the binary associative operator for dynamic programming is
\begin{equation}\label{eq:operator_deterministic}
\begin{split}
  a \otimes b \triangleq \min_{z} \left\{ V_a(x,z) + V_b(z,y) \right\}.
\end{split}
\end{equation}
\end{defn}
This operator is an associative operator because $\min$ operation is associative. This is summarized in the following lemma.

\begin{lem}
The operator in Definition~\ref{def:gen-op} is associative.
\end{lem}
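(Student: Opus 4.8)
The plan is to show that for any three elements $a = V_a(\cdot,\cdot)$, $b = V_b(\cdot,\cdot)$, and $c = V_c(\cdot,\cdot)$ of the form \eqref{eq:element_conditional_value}, we have $(a \otimes b) \otimes c = a \otimes (b \otimes c)$. Since each side is itself a conditional value function of the endpoints $(x,y)$, it suffices to expand both compositions using Definition~\ref{def:gen-op} and verify that the resulting expressions, as functions of $x$ and $y$, coincide. First I would write out the left-hand side: by definition $a \otimes b = \min_{z} \{ V_a(x,z) + V_b(z,y) \}$, so composing this with $c$ introduces a second auxiliary variable $w$, giving
\begin{equation}
((a \otimes b) \otimes c)(x,y) = \min_{w} \left\{ \min_{z} \left\{ V_a(x,z) + V_b(z,w) \right\} + V_c(w,y) \right\}.
\end{equation}
Symmetrically, the right-hand side expands to
\begin{equation}
(a \otimes (b \otimes c))(x,y) = \min_{z} \left\{ V_a(x,z) + \min_{w} \left\{ V_b(z,w) + V_c(w,y) \right\} \right\}.
\end{equation}

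The key step is then to observe that both expressions equal the joint minimisation
\begin{equation}
\min_{z,w} \left\{ V_a(x,z) + V_b(z,w) + V_c(w,y) \right\}.
\end{equation}
To justify this I would invoke two elementary facts about the minimum operator: first, that a constant (with respect to the inner minimisation variable) can be moved freely inside or outside a $\min$, since $\min_{z}\{g(z) + h\} = \min_{z}\{g(z)\} + h$ when $h$ does not depend on $z$; and second, that nested minimisations over independent variables collapse into a single joint minimisation, $\min_{w}\min_{z} = \min_{z,w}$. Applying the first fact lets me pull $V_c(w,y)$ inside the inner $\min_z$ on the left-hand side (and $V_a(x,z)$ inside the inner $\min_w$ on the right-hand side), and applying the second fact merges the two nested minima into one. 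Both sides thereby reduce to the same symmetric joint minimisation, establishing the identity.

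The main obstacle, such as it is, is conceptual rather than technical: one must be careful that the auxiliary variables $z$ and $w$ range over the same space independently, so that interchanging and merging the minimisations is genuinely valid, and that the $\infty$ values arising from disconnected paths (as in Definition~\ref{Defn:conditional_value_function}) are handled consistently — but this is automatic because the extended reals $\mathbb{R} \cup \{+\infty\}$ form an ordered set on which $\min$ and $+$ behave associatively and the absorbing element $+\infty$ propagates correctly through sums. Since no cancellation is ever required, no further case analysis is needed. The claim then follows directly, and indeed this is precisely the associativity of $\min$ combined with additivity that the paragraph preceding the lemma alludes to.
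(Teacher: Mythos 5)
Your proof is correct and takes essentially the same route as the paper's: the paper establishes associativity by the single interchange $\min_{z'}\{\min_{z}\{V_a+V_b\}+V_c\} = \min_{z}\{V_a+\min_{z'}\{V_b+V_c\}\}$, which is precisely the joint-minimisation argument you spell out. Your explicit treatment of the extended reals and the absorbing element $+\infty$ is a careful elaboration the paper leaves implicit, but it does not constitute a different approach.
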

\begin{proof}
For three elements $a$, $b$, and $c$, we have
\begin{equation}
\begin{split}
  &(a \otimes b) \otimes c \\
  &\triangleq
  \min_{z'} \left\{ \min_{z} \left\{ V_a(x,z) + V_b(z,z') \right\} + V_c(z',y) \right\} \\
  &=
    \min_{z} \left\{ V_a(x,z) + \min_{z'} \left\{ V_b(z,z') + V_c(z',y)  \right\} \right\} \\
  &\triangleq
    a \otimes (b \otimes c),
\end{split}
\end{equation}
which shows that $(a \otimes b) \otimes c = a \otimes (b \otimes c)$.
\end{proof}
The elements and combination rule allows us to construct the conditional and conventional value functions as follows.
\begin{thm} \label{the:gen-init}
If we initialize the elements $a_k$ for $k=S,\ldots,T$ as
\begin{equation}
\begin{split}
  a_k &= V_{k \to k+1}(x_k,x_{k+1}),
\end{split}
\label{eq:gen-init}
\end{equation}
where $V_{T \to T+1}(x_T,x_{T+1}) \triangleq V_T(x_T)$, then
\begin{equation}
\begin{split}
  a_S \otimes a_{S+1} \otimes \cdots \otimes a_{k-1} &= V_{S \to k}(x_S,x_k) \label{eq:parallel_cond_V}
\end{split}
\end{equation}
and
\begin{equation}
\begin{split}
  a_k \otimes a_{i+1} \otimes \cdots \otimes a_T &= V_k(x_k). \label{eq:parallel_V_k}
\end{split}
\end{equation}
\end{thm}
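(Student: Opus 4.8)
The plan is to establish both identities by induction on the number of combined elements, leaning on the associativity of $\otimes$ (the preceding lemma) to bracket the products freely and on the two combination rules of Theorem~\ref{the:v-comb} to collapse each newly appended element.

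First, for \eqref{eq:parallel_cond_V}, I would prove by induction on $k$ that $a_S \otimes a_{S+1} \otimes \cdots \otimes a_{k-1} = V_{S \to k}(x_S, x_k)$. The base case $k = S+1$ is immediate, since the product consists of the single element $a_S = V_{S \to S+1}(x_S, x_{S+1})$. For the inductive step, associativity lets me write the length-$(k-S)$ product as $\left( a_S \otimes \cdots \otimes a_{k-2} \right) \otimes a_{k-1}$, and the induction hypothesis identifies the bracketed factor with $V_{S \to k-1}(x_S, x_{k-1})$. Unfolding Definition~\ref{def:gen-op} with $V_a = V_{S \to k-1}$ and $V_b = a_{k-1} = V_{k-1 \to k}$ then gives
\begin{equation*}
\min_{z}\left\{ V_{S \to k-1}(x_S, z) + V_{k-1 \to k}(z, x_k) \right\},
\end{equation*}
which is exactly the right-hand side of the combination rule \eqref{eq:vki-comb} with $(k,j,i) = (S, k-1, k)$, hence equals $V_{S \to k}(x_S, x_k)$. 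This closes the induction.

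For \eqref{eq:parallel_V_k}, I would first apply the same inductive argument to the elements $a_k, \ldots, a_{T-1}$ to obtain $a_k \otimes \cdots \otimes a_{T-1} = V_{k \to T}(x_k, x_T)$. The remaining step is to append the terminal element $a_T$, and here associativity again lets me bracket the full product as $\left( a_k \otimes \cdots \otimes a_{T-1} \right) \otimes a_T$. Since $a_T = V_{T \to T+1}(x_T, x_{T+1}) \triangleq V_T(x_T)$ does not depend on its second argument, Definition~\ref{def:gen-op} yields
\begin{equation*}
\min_{z} \left\{ V_{k \to T}(x_k, z) + V_T(z) \right\},
\end{equation*}
which matches the right-hand side of the value-function combination rule \eqref{eq:vk-comb} with $i = T$, and therefore equals $V_k(x_k)$. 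When $k = T$ the product is the single element $a_T = V_T(x_T)$ and the claim is trivial.

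The main subtlety is the asymmetric role of the terminal element: the induction for the conditional value functions uses only \eqref{eq:vki-comb}, but it cannot be carried uniformly through step $T$, because $a_T$ is an ordinary value function rather than a conditional value function. The transition must instead be made through the second combination rule \eqref{eq:vk-comb}, and one has to check that feeding $V_T$, which is constant in its second slot, into the operator $\otimes$ reproduces precisely the minimisation over the terminal state $x_T$ appearing in \eqref{eq:vk-comb}. Everything else is bookkeeping guaranteed by associativity.
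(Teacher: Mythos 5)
Your proposal is correct and matches the paper's proof in essence: the paper simply states that \eqref{eq:parallel_cond_V} follows from sequential application of \eqref{eq:vki-comb} forward and \eqref{eq:parallel_V_k} from sequential application of \eqref{eq:vk-comb} backwards, and your induction is the explicit write-up of exactly that argument. The only cosmetic difference is your bracketing of the second identity---collapsing $a_k \otimes \cdots \otimes a_{T-1}$ to $V_{k \to T}$ via \eqref{eq:vki-comb} and then applying \eqref{eq:vk-comb} once at the terminal element, rather than prepending elements one at a time with \eqref{eq:vk-comb}---which is equivalent by the associativity you correctly invoke.
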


\begin{proof}
Equation \eqref{eq:parallel_cond_V} results from the sequential application of \eqref{eq:vki-comb} forward, and \eqref{eq:parallel_V_k} from the sequential application \eqref{eq:vk-comb} backwards.
\end{proof}

Theorem~\ref{the:gen-init} implies that we can compute all value functions $V_k(\cdot)$ by initializing the elements as in \eqref{eq:gen-init}, using the associative operator in Definition \ref{def:gen-op} and computing  \eqref{eq:parallel_V_k} for $k=S,\ldots,T-1$, which corresponds to a (reverted) all-prefix-sum operation. Because the initialisation is fully parallelisable, we can directly use the parallel scan algorithm (see Algorithm \ref{alg:Parallel_scan}) to compute all value functions in  $O(\log T)$ parallel steps.

\begin{rem} \label{rem:gen-controls}
After computing all the value functions, we can obtain all the control laws $u_k(x_k)$ for $k=S,\cdots, T-1$ by using \eqref{eq:det_control}. This operation can be done in parallel for each $k$.
\end{rem}

\begin{rem} \label{rem:gen-xS-init2}
If we are interested in evaluating $V_{S \to k}(x_S,x_k)$ at a given $x_S$, as we are in trajectory recovery, then instead of first using \eqref{eq:parallel_cond_V} with initialisation 
\eqref{eq:gen-init}, and then evaluating the result at $x_S$, we can also initialise an extra element
\begin{equation}
\begin{split}
  a_{S-1} &= V_{S-1 \to S}(x,x') = \begin{cases}
    0 & \text{ if } x' = x_S, \\
    \infty & \text{ otherwise.}
  \end{cases}
\end{split}
\label{eq:gen-init2}
\end{equation}

\end{rem}

\subsection{Optimal trajectory recovery} \label{subsec:Optimal_trajectory}

Once we have obtained the optimal control laws \eqref{eq:det_control} in parallel, we can compute the resulting optimal trajectory $(x^*_{S+1},\ldots,x^*_T)$ in parallel using two methods. 

\subsubsection{Method 1}
In the first method for trajectory recovery, the state of the optimal trajectory at time step $k$ can be computed by using \eqref{eq:optimal_trajectory} and the composition of functions
\begin{equation}
x^*_{k}=\left(f_{k-1}^{*}\circ\ldots\circ f_{S+1}^{*}\circ f_{S}^{*}\right)(x_{S}). \label{eq:prediction_traj_composition}
\end{equation}
We can compute \eqref{eq:prediction_traj_composition} using parallel scans as follows. The associative element $a$ is defined to be a function on $x$, $a=f_a(\cdot)$ and the operator is the function composition in the following definition.

\begin{defn} \label{def:trajectory-op}
	Given elements $a=f_a(\cdot)$ and $b=f_b(\cdot)$, the binary associative operator for optimal trajectory recovery is 
	\begin{equation}
	\begin{split}
	a \otimes b \triangleq   f_b \circ f_a 
	\end{split}
	\end{equation}
	where $\circ$ denotes the composition of two functions, which is an associative operator \cite{Apostol_book67}. We should note that the order of the function composition is reverted.
\end{defn}
Then, we can recover the optimal trajectory via the following lemma.

\begin{lem} \label{lem:gen-trajectory}
	If we initialize element $a_S$ as the function $f^*_{S}(\cdot)$, which is given by \eqref{eq:optimal_trajectory}, evaluated at $x_S$
	\begin{equation}
	\begin{split}
	a_S &= f^*_{S}(x_S),
	\end{split}
	\end{equation}
	and, for $k=S+1,\ldots,T-1$, $a_k$ is initialised as the function
	\begin{equation}
	\begin{split}
	a_k &= f^*_{k}(\cdot),
	\end{split}
	\end{equation}
	then
	\begin{equation}
	\begin{split}
	a_{S} \otimes a_{S+1} \otimes \cdots \otimes a_{k-1}  &= x^*_k, \\
	\end{split}
	\end{equation}
	where $x^*_{k}$ is the state of the optimal trajectory at time step $k$.
\end{lem}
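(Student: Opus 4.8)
The plan is to prove the claim by induction on $k$, leaning on two ingredients already available: the associativity of the composition operator established in Definition~\ref{def:trajectory-op}, and the forward recursion \eqref{eq:optimal_trajectory} that defines the optimal trajectory. The guiding observation is that, since $\otimes$ reverses the order of composition, repeatedly applying it rebuilds exactly the nested composition in \eqref{eq:prediction_traj_composition}, so the whole argument amounts to matching the operator's convention against the direction of that composition.

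First I would unfold the definition of $\otimes$ along the scan product. Because $a \otimes b$ denotes $f_b \circ f_a$, a direct left-to-right expansion shows that, viewed as a map of its argument,
\begin{equation}
a_{S} \otimes a_{S+1} \otimes \cdots \otimes a_{k-1}
= f^*_{k-1} \circ f^*_{k-2} \circ \cdots \circ f^*_{S+1} \circ f_{a_S},
\end{equation}
where $f_{a_S}$ is the map obtained from the initialisation of $a_S$. Associativity from Definition~\ref{def:trajectory-op} guarantees that this product is independent of the bracketing chosen by the parallel scan, so it suffices to analyse the fully left-nested form above.

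Next I would account for the initialisation. Since $a_S$ is set to the value $f^*_S(x_S) = x^*_{S+1}$ rather than to the function $f^*_S(\cdot)$, the map $f_{a_S}$ is the constant returning $x^*_{S+1}$, and substituting it yields the constant
\begin{equation}
\left(f^*_{k-1} \circ \cdots \circ f^*_{S+1}\right)\!\left(f^*_S(x_S)\right)
= \left(f^*_{k-1} \circ \cdots \circ f^*_{S+1} \circ f^*_S\right)(x_S),
\end{equation}
which equals $x^*_k$ by \eqref{eq:prediction_traj_composition}. Equivalently, I could argue inductively: the base case $a_S = x^*_{S+1}$ holds by \eqref{eq:optimal_trajectory}, and assuming $a_S \otimes \cdots \otimes a_{k-1} = x^*_k$, applying $\otimes\, a_k$ gives $f^*_k(x^*_k) = x^*_{k+1}$, again by \eqref{eq:optimal_trajectory}, which closes the induction.

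The step I would treat most carefully, and the only genuinely delicate point, is the dual role of $a_S$: it is described as a function yet initialised to a value. The clean resolution is to regard $a_S$ as the constant function with value $x^*_{S+1}$, so that every partial product $a_S \otimes \cdots \otimes a_{k-1}$ is itself a constant function whose value is the desired state $x^*_k$. Once this interpretation is fixed, the reversed-order convention of Definition~\ref{def:trajectory-op} aligns exactly with the forward order in \eqref{eq:prediction_traj_composition}, and the conclusion follows at once.
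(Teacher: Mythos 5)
Your proof is correct and follows exactly the reasoning the paper leaves implicit: the paper states Lemma~\ref{lem:gen-trajectory} without a proof, treating it as immediate from Definition~\ref{def:trajectory-op} and the composition \eqref{eq:prediction_traj_composition}, and your expansion of the scan product into $f^*_{k-1} \circ \cdots \circ f^*_{S+1}$ applied to the initial value reproduces precisely that argument. Your explicit handling of the one delicate point --- interpreting $a_S$ as the constant function with value $x^*_{S+1}$, so that every partial product is itself a constant equal to the desired state --- is a faithful and welcome filling-in of the detail the paper glosses over.
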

\subsubsection{Method 2}
An alternative method, which resembles the max-product algorithm in probabilistic graphical models \cite{Koller_book09}, is based on noticing that, from the definition of the conditional value function \eqref{eq:vki-definition} and the value function \eqref{eq:Value_function}, the state of the optimal trajectory at time step $k$ is given by
\begin{equation}\label{eq:trajectory_alternative}
x^*_k=\arg \min_{x_{k}} \{V_{S\to k}(x_{S},x_{k})+V_{k}(x_{k})\},
\end{equation}
where we recall that $x_{S}$ is the initial known state. That is, we can just minimise the sum of the forward conditional value function $V_{S\to k}(x_{S},x_{k})$ and the (backwards) value function $V_{k}(x_{k})$, which can be calculated using parallel scans via \eqref{eq:parallel_cond_V} and \eqref{eq:parallel_V_k}, respectively. Then, the minimisation \eqref{eq:trajectory_alternative} can be done for each node in parallel.

It should be noted that both approaches for optimal trajectory recovery require two parallel scans, one forward and one backwards, and one minimisation for each node.

\subsection{Finite state and control spaces} \label{subsec:finite_state}

The case in which the state and the control input belong to finite state spaces is important as we can solve the control problem in both sequential and parallel forms in closed-form. Let $x_{k}\in\left\{ 1,...,D_{x}\right\}$  and $u_{k}\in\left\{ 1,...,D_{u}\right\}$ where $D_x$ and $D_u$ are natural numbers. Then, $f_{k}\left(x_{k},u_{k}\right)$ and $\ell_{n}(x_{n},u_{n})$ can be represented by matrices of dimensions $D_{x}\times D_{u}$, $V_{k}(x_{k})$ by a vector of dimension $D_{x}$, $u_{k}(x_{k})$ by a vector of dimension $D_{x}$, and $V_{k\to i}(x_{k},x_{i})$ by a matrix of size $D_{x}\times D_{x}$. Due to the finite state space, the required minimisations in \eqref{eq:Value_function} and \eqref{eq:operator_deterministic}, can be performed by exhaustive search, which can also be parallelised.

For Method 1 for optimal trajectory recovery, the function $f_{k}^{*}(\cdot)$ can be represented as a vector of dimension $D_{x}$, and the function composition in \eqref{eq:trajectory_alternative} can be performed by evaluating all cases. 

\section{Parallel linear quadratic tracker} \label{sec:Parallel_LQT}

In this section, we provide the parallel solution to the LQT case. In Section \ref{subsec:Cond_value_LQT}, we derive the conditional value functions and combination rules. In Section \ref{subsec:Assoc_elements_LQT}, we derive the associative elements to obtain the value functions $V_{S \to k}(x_S,x_{k})$ and $V_{k}(x_{k})$. In Section \ref{sec:lqt_forward}, we address the computation of the optimal trajectory. Finally, in Section \ref{sec:extensions}, we discuss extensions to stochastic and non-linear problems.

\subsection{Conditional value functions and combination rules}\label{subsec:Cond_value_LQT}
For the LQT problem in \eqref{eq:LQT_problem}, $V_{k\to i}(x_{k},x_{i})$ in \eqref{eq:vki-definition} is a quadratic program with affine equality constraints \cite{Boyd_book04} that we represent by its dual problem
\begin{equation} \label{eq:Vki_dual}
V_{k\to i}(x_{k},x_{i})=\max_{\lambda}g_{k\to i}(\lambda;x_{k},x_{i}),
\end{equation}
where $\lambda$ is a Lagrange multiplier $n_x \times 1$ vector and the dual function $g_{k\to i}\left(\cdot,\cdot,\cdot\right)$ has the parameterisation  
\begin{align}
g_{k\to i}(\lambda;x_{k},x_{i})&=\mathrm{z}+\frac{1}{2}x_{k}^{\top}J_{k,i}x_{k}-x_{k}^{\top}\eta_{k,i}\nonumber\\
&-\frac{1}{2}\lambda^{\top}C_{k,i}\lambda-\lambda^{\top}\left(x_{i}-A_{k,i}x_{k}-b_{k,i}\right). \label{eq:g_ki}
\end{align}
If $C_{k,i}$ is invertible, one can solve \eqref{eq:Vki_dual}  by calculating the gradient of \eqref{eq:g_ki} with respect to $\lambda$ and setting it equal to zero, to obtain
\begin{align}
&V_{k\to i}(x_{k},x_{i})\nonumber\\
&=\mathrm{z}+\frac{1}{2}x_{k}^{\top}J_{k,i}x_{k}-x_{k}^{\top}\eta_{k,i}\nonumber\\
&+\frac{1}{2}\left(x_{i}-A_{k,i}x_{k}-b_{k,i}\right)^{\top}C_{k,i}^{-1}\left(x_{i}-A_{k,i}x_{k}-b_{k,i}\right). \label{eq:V_ki_invertible_C}
\end{align}

In this case, we can also interpret the  conditional value function \eqref{eq:V_ki_invertible_C} in terms of conditional Gaussian distributions as
\begin{align}
&\exp\left(-V_{k\to i}(x_{k},x_{i})\right)\nonumber\\
&\propto \mathrm{N}(x_i ; A_{k,i} x_k + b_{k,i}, C_{k,i}) \,
\mathrm{N}_I(x_k ; \eta_{k,i}, J_{k,i}), \label{eq:Gaussian_equi}
\end{align}
where $\mathrm{N}(\cdot ;\overline{x}, P)$ denotes a Gaussian density with mean $\overline{x}$ and covariance matrix $P$, and $\mathrm{N}_{I}\left(\cdot;\eta,J\right)$ denotes a Gaussian density parameterised in information form with information vector $\eta$ and information matrix $J$. A Gaussian distribution with mean $\overline{x}$ and covariance
matrix $P$ can be written in its information form as $\eta=P^{-1}\overline{x}$ and $J=P^{-1}$. 

Nevertheless, in general, $C_{k,i}$ is not invertible so it is suitable to keep the dual function parameterisation in \eqref{eq:Vki_dual}.

\begin{lem} \label{lem:LQT_combination}
Given two elements $V_{k \to j}(x_k,x_j)$ and $V_{j \to i}(x_j,x_i)$ of the form \eqref{eq:Vki_dual}, their combination $V_{k \to i}(x_k,x_i)$, which is obtained using Theorem \ref{the:v-comb}, is of the form  \eqref{eq:Vki_dual} and characterised by
\begin{equation}
\begin{split}
A_{k,i} &= A_{j,i} (I + C_{k,j} J_{j,i})^{-1} A_{k,j}, \\
b_{k,i} &= A_{j,i} (I + C_{k,j} J_{j,i})^{-1} (b_{k,j} + C_{k,j} \eta_{j,i}) + b_{j,i}, \\
C_{k,i} &= A_{j,i} (I + C_{k,j} J_{j,i})^{-1} C_{k,j} A_{j,i}^\top + C_{j,i}, \\
\eta_{k,i} &= A_{k,j}^\top (I + J_{j,i} C_{k,j})^{-1} (\eta_{j,i} - J_{j,i} b_{k,j}) + \eta_{k,j}, \\
J_{k,i} &= A_{k,j}^\top (I + J_{j,i} C_{k,j})^{-1} J_{j,i} A_{k,j} + J_{k,j}.
\end{split}
\label{eq:lqt_comb}
\end{equation}
where $I$ is an identity matrix of size $n_x$.
\end{lem}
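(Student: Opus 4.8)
The plan is to exploit the probabilistic interpretation in \eqref{eq:Gaussian_equi}: when the $C$ matrices are invertible, each dual value function is, up to the additive constant $\mathrm{z}$, the negative logarithm of a product of a forward Gaussian transition density and an information-form Gaussian factor. I would first carry out the computation under this invertibility assumption and then recover the general (possibly singular $C$) case by a continuity argument, since both sides of \eqref{eq:lqt_comb} and the dual parameterisation \eqref{eq:Vki_dual}--\eqref{eq:g_ki} depend continuously on the parameters.

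By Theorem~\ref{the:v-comb}, $V_{k\to i}(x_k,x_i) = \min_{x_j}\{V_{k\to j}(x_k,x_j) + V_{j\to i}(x_j,x_i)\}$. In the exponential domain this minimisation becomes a maximisation over $x_j$ of the product of the four Gaussian factors coming from \eqref{eq:Gaussian_equi}, namely $\mathrm{N}(x_j; A_{k,j}x_k + b_{k,j}, C_{k,j})$, $\mathrm{N}_I(x_k;\eta_{k,j}, J_{k,j})$, $\mathrm{N}(x_i; A_{j,i}x_j + b_{j,i}, C_{j,i})$, and $\mathrm{N}_I(x_j;\eta_{j,i},J_{j,i})$. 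Because the integrand is jointly Gaussian, profiling out $x_j$ by maximisation yields the same parameters as marginalising it by integration, the two differing only by an $x_j$-independent factor that is absorbed into $\mathrm{z}$. Hence I may compute the result using standard Gaussian marginalisation identities.

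The computation proceeds in two stages. First I would fuse the forward density $\mathrm{N}(x_j; A_{k,j}x_k + b_{k,j}, C_{k,j})$ with the information factor $\mathrm{N}_I(x_j;\eta_{j,i},J_{j,i})$; the product is proportional to a Gaussian in $x_j$ whose covariance is $(C_{k,j}^{-1}+J_{j,i})^{-1}$ and whose mean, after applying the matrix inversion lemma in the push-through form $(C_{k,j}^{-1}+J_{j,i})^{-1} = (I+C_{k,j}J_{j,i})^{-1}C_{k,j}$, equals $(I+C_{k,j}J_{j,i})^{-1}(A_{k,j}x_k+b_{k,j}+C_{k,j}\eta_{j,i})$. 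Second, I would propagate this Gaussian through the linear transition $\mathrm{N}(x_i; A_{j,i}x_j + b_{j,i}, C_{j,i})$ and marginalise $x_j$, which directly produces $A_{k,i}$, $b_{k,i}$ and $C_{k,i}$ as the mean map and covariance of $x_i$ given $x_k$, matching \eqref{eq:lqt_comb}. The normalising constant generated by the fusion depends on $x_k$ through the mean $A_{k,j}x_k+b_{k,j}$; collecting its quadratic and linear parts in $x_k$ and adding the untouched factor $\mathrm{N}_I(x_k;\eta_{k,j},J_{k,j})$ yields $J_{k,i}$ and $\eta_{k,i}$, again via the push-through identity $(I+J_{j,i}C_{k,j})^{-1}J_{j,i} = J_{j,i}(I+C_{k,j}J_{j,i})^{-1}$.

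I expect the main obstacle to be the matrix bookkeeping: repeatedly rewriting $(C_{k,j}^{-1}+J_{j,i})^{-1}$ and related inverses in the push-through forms that expose the factors $(I+C_{k,j}J_{j,i})^{-1}$ and $(I+J_{j,i}C_{k,j})^{-1}$ appearing in \eqref{eq:lqt_comb}, and verifying that the information-form pieces such as $(I+J_{j,i}C_{k,j})^{-1}J_{j,i}$ remain symmetric so that $J_{k,i}$ is symmetric. The secondary obstacle is the passage to singular $C_{k,j}$ or $C_{j,i}$, for which the Gaussian densities are not defined; here I would either invoke continuity from the invertible case or, for a self-contained argument, work directly with the dual functions \eqref{eq:g_ki}, using that the objective is convex in $x_j$ and concave in the multipliers so that a minimax exchange lets me perform $\min_{x_j}$ in closed form before identifying a single effective multiplier $\lambda$.
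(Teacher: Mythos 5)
Your main computational route is correct and genuinely different from the paper's. The paper works entirely within the dual parameterisation \eqref{eq:g_ki}: it writes the combination as $\min_{x_j}\{\max_{\lambda_1} g_{k\to j}(\lambda_1;x_k,x_j)+\max_{\lambda_2} g_{j\to i}(\lambda_2;x_j,x_i)\}$, exchanges the minimum with the maxima, solves the stationarity conditions in $x_j$ and $\lambda_1$ in closed form---inverting only $I+J_{j,i}C_{k,j}$, which is always invertible when $C_{k,j}\geq 0$ and $J_{j,i}\geq 0$---and keeps $\lambda_2$ as the Lagrange multiplier of the combined element; \eqref{eq:lqt_comb} then follows by term identification after substituting the stationary points back. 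Your two-stage Gaussian computation (fuse the transition with the information factor in $x_j$, then propagate and marginalise) reproduces exactly the five formulas in the invertible case, and the probabilistic reading is fully legitimate: the paper itself remarks after the lemma that \eqref{eq:lqt_comb} coincides with the combination rule of the parallel linear Gaussian filter in \cite{Sarkka:2021}, which considers densities of the form \eqref{eq:Gaussian_equi}. Your observation that profiling the jointly Gaussian exponent over $x_j$ agrees with marginalisation up to an $x_j$-independent constant is sound here because the Hessian block $C_{k,j}^{-1}+J_{j,i}$ is positive definite. What the paper's route buys is that degenerate $C$ matrices need no separate treatment; what yours buys is reuse of standard Kalman-filter identities and a more transparent structure.

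The one place where you are thinner than you acknowledge is the continuity step. The objects involved are extended-real-valued: for singular $C_{k,j}$ the element $V_{k\to j}=\max_\lambda g_{k\to j}$ equals $+\infty$ off an affine set, so ``both sides depend continuously on the parameters'' is not pointwise continuity of real-valued functions, and you must justify interchanging $\min_{x_j}$ with the limit $\epsilon\downarrow 0$ in the perturbation $C\mapsto C+\epsilon I$. This is repairable but not free: since $g$ is decreasing in $C$ in the positive semidefinite order, $V^{\epsilon}$ increases pointwise to $V$ as $\epsilon\downarrow 0$, and coercivity in $x_j$ of the perturbed sum lets the infimum pass to the limit; you must additionally verify that the combined dual-form function built from the perturbed parameters converges pointwise to the one built from the limiting parameters, including matching the set where both are $+\infty$ (a range condition on the limiting $C_{k,i}$). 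These verifications are exactly what the paper's saddle-point computation avoids. Note finally that your stated fallback---performing $\min_{x_j}$ in closed form directly on the dual functions after a convex--concave minimax exchange---is precisely the paper's proof, so if the continuity bookkeeping grows unwieldy, the clean fix is to promote that fallback to the main argument.
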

The proof is provided in Appendix \ref{sec:LQT_combination_append}. It should be noted that the combination rule \eqref{eq:lqt_comb} is equivalent to the combination rule for the parallel linear and Gaussian filter, which also considers Gaussian densities of the form \eqref{eq:Gaussian_equi} \cite[Lem. 8]{Sarkka:2021}. 

\subsection{Associative elements to obtain the value functions}\label{subsec:Assoc_elements_LQT}

The following lemma establishes how to define the elements of the parallel scan algorithms to obtain the value functions $V_{S \to k}(x_S,x_{k})$ and $V_{k}(x_{k})$.

\begin{lem}\label{lem:LQT_assoc}
If we initialize the elements $a_k$ for $k=S,\ldots,T$ as:
\begin{equation}
\begin{split}
a_k &= V_{k \to k+1}(x_k,x_{k+1}),
\end{split}
\label{eq:LQT-init}
\end{equation}
where $V_{k \to k+1}(x_k,x_{k+1})$ is of the form  \eqref{eq:Vki_dual} with
\begin{equation}
\begin{split}
A_{k,k+1} &= F_k, \\
b_{k,k+1} &= c_k, \\
C_{k,k+1} &= L_k U_k^{-1} L_k^\top, \\
\eta_{k,k+1} &= H_k^\top X_k r_k, \\
J_{k,k+1} &= H_k^\top X_k H_k,
\end{split}
\label{eq:lqt_init}
\end{equation}
for $k=S,\cdots, T-1$ and $V_{T \to T+1}(x_T,x_{T+1})$ has parameters
\begin{equation}
\begin{split}
A_{T,T+1} &= 0, \\
b_{T,T+1} &= 0, \\ 
C_{T,T+1} &= 0, \\
\eta_{T,T+1} &= H_T^\top r_T, \\
J_{T,T+1} &= H_T^\top X_T H_T,
\end{split}
\end{equation}
then, 
\begin{equation}
\begin{split}
a_S \otimes a_{S+1} \otimes \cdots \otimes a_{k-1} &= V_{S \to k}(x_S,x_{k}) ,\label{eq:LQT_lem_V_S_k} \\
\end{split}
\end{equation}
and
\begin{equation}
\begin{split}
a_k \otimes a_{i+1} \otimes \cdots \otimes a_T &= V_{k\to T+1}(x_{k},x_{T+1}).\label{eq:LQT_lem_V_k}
\end{split}
\end{equation}
where
\begin{equation}
V_{k\to T+1}(x_{k},0)=V_{k}(x_{k}).\label{eq:LQT_lem_V_k2}
\end{equation}
Furthermore, $V_{k}(x_{k})$ is of the form \eqref{eq:V_k_LQT} with
\begin{equation}
\begin{split}
S_{k} &= J_{k,T+1}, \\
v_{k} &= \eta_{k,T+1}. \\
\end{split}
\end{equation}
\end{lem}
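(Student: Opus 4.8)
The plan is to establish the four assertions in sequence, leaning on the general machinery of Theorem~\ref{the:gen-init} and the LQT-specific combination rule of Lemma~\ref{lem:LQT_combination}. First I would verify that the initialisation \eqref{eq:lqt_init} genuinely encodes the single-step conditional value function $V_{k \to k+1}(x_k,x_{k+1})$ in the dual form \eqref{eq:Vki_dual}--\eqref{eq:g_ki}. For one step, $V_{k\to k+1}$ is the minimum of the stage cost $\ell_k(x_k,u_k)$ over controls $u_k$ subject to the single affine constraint $x_{k+1}=F_k x_k + c_k + L_k u_k$. The stage cost splits into a term $\tfrac{1}{2}(H_k x_k - r_k)^\top X_k (H_k x_k - r_k)$ in $x_k$ alone, which should produce the information-form pair $\eta_{k,k+1}=H_k^\top X_k r_k$ and $J_{k,k+1}=H_k^\top X_k H_k$, and a control penalty $\tfrac{1}{2}u_k^\top U_k u_k$. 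Carrying out the Lagrangian dual with multiplier $\lambda$ on the dynamics constraint, the control is eliminated via $U_k$, and the Gaussian-equivalence reading \eqref{eq:Gaussian_equi} makes the identification transparent: the conditional $\mathrm{N}(x_{k+1};F_k x_k + c_k, L_k U_k^{-1} L_k^\top)$ fixes $A_{k,k+1}=F_k$, $b_{k,k+1}=c_k$, $C_{k,k+1}=L_k U_k^{-1}L_k^\top$, and the $\mathrm{N}_I$ factor fixes the information pair. This is the most delicate part, since it requires tracking how the quadratic control penalty maps, under duality, to the covariance $C_{k,k+1}$ rather than an information matrix.

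Second, equation \eqref{eq:LQT_lem_V_S_k} follows immediately: Theorem~\ref{the:gen-init}, equation \eqref{eq:parallel_cond_V}, already guarantees that the forward product of the $a_k$ equals $V_{S\to k}(x_S,x_k)$ for the \emph{abstract} operator, and Lemma~\ref{lem:LQT_combination} guarantees that the dual parameterisation \eqref{eq:Vki_dual} is \emph{closed} under $\otimes$ with the explicit update \eqref{eq:lqt_comb}. Hence the forward scan stays in the parametric family and computes exactly $V_{S\to k}$. Third, for \eqref{eq:LQT_lem_V_k} I would introduce the terminal element at $k=T$ and check that it represents $V_T(x_T)=\ell_T(x_T)$ through the convention $V_{T\to T+1}(x_T,x_{T+1})\triangleq V_T(x_T)$ from Theorem~\ref{the:gen-init}. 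The chosen parameters $A_{T,T+1}=0$, $b_{T,T+1}=0$, $C_{T,T+1}=0$ make the dual function independent of $x_{T+1}$, so that $g_{T\to T+1}$ collapses to $\mathrm{z}+\tfrac{1}{2}x_T^\top J_{T,T+1} x_T - x_T^\top \eta_{T,T+1}$, which with $J_{T,T+1}=H_T^\top X_T H_T$ and $\eta_{T,T+1}=H_T^\top r_T$ reproduces $\ell_T$ up to the terminal weight $X_T$. Equation \eqref{eq:parallel_V_k} of Theorem~\ref{the:gen-init} then gives $a_k\otimes\cdots\otimes a_T = V_{k\to T+1}(x_k,x_{T+1})$ directly.

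Finally, I would establish \eqref{eq:LQT_lem_V_k2} and the identification of $S_k,v_k$. Evaluating $V_{k\to T+1}(x_k,x_{T+1})$ at $x_{T+1}=0$ reduces \eqref{eq:vk-comb} to $\min_{x_{T+1}}\{\dots\}$ against the trivial terminal condition, recovering the ordinary value function $V_k(x_k)$; this is where the ``phantom'' terminal state $x_{T+1}$, introduced only to put everything under one scan, is marginalised away. Plugging $x_i=x_{T+1}=0$ into the dual form \eqref{eq:g_ki} kills the $\lambda$-dependent terms (the maximising $\lambda$ annihilates the residual, or equivalently one reads off the $C_{k,T+1}^{-1}$-free part of \eqref{eq:V_ki_invertible_C}), leaving $V_k(x_k)=\mathrm{z}+\tfrac{1}{2}x_k^\top J_{k,T+1} x_k - x_k^\top \eta_{k,T+1}$. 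Matching this against the standard LQT form \eqref{eq:V_k_LQT} yields $S_k=J_{k,T+1}$ and $v_k=\eta_{k,T+1}$. I expect the main obstacle to be the base-case duality computation in the first step; once the single-step parameters are pinned down, the remaining claims are corollaries of the already-proved Theorems~\ref{the:gen-init} and Lemma~\ref{lem:LQT_combination} together with the $x_{T+1}=0$ substitution.
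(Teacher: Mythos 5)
Your overall strategy is sound and, on the first half, coincides with the paper's own proof: the paper likewise derives the one-step initialisation \eqref{eq:lqt_init} by forming the Lagrangian of the one-step constrained problem, minimising over $u_k$ (giving $u_k=-U_k^{-1}L_k^\top\lambda$), and comparing the resulting dual function with \eqref{eq:g_ki}; it then obtains \eqref{eq:LQT_lem_V_S_k} by applying Theorem~\ref{the:v-comb}, which in this parametric family is Lemma~\ref{lem:LQT_combination}. One caution on your ``transparent'' Gaussian reading: \eqref{eq:Gaussian_equi} presupposes an invertible $C_{k,i}$, and $C_{k,k+1}=L_kU_k^{-1}L_k^\top$ is singular whenever $n_u<n_x$, so the Lagrangian-duality computation you also sketch is the only rigorous route to \eqref{eq:lqt_init} --- and it is exactly what the paper does.

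Where you genuinely diverge is the backward half. The paper proves \eqref{eq:LQT_lem_V_k}--\eqref{eq:LQT_lem_V_k2} by backward induction inside the family: base case $V_{T\to T+1}(x_T,x_{T+1})=+\infty$ for $x_{T+1}\neq 0$ and $=V_T(x_T)$ at $x_{T+1}=0$; the inductive step shows $A_{k,T+1}=b_{k,T+1}=C_{k,T+1}=0$ persists under \eqref{eq:lqt_comb} and then verifies, via the matrix inversion lemma, the identity $F_k^\top(I+S_{k+1}L_kU_k^{-1}L_k^\top)^{-1}=(F_k-L_kK_k)^\top$, so that $\eta_{k,T+1}$ and $J_{k,T+1}$ satisfy precisely the classical recursions \eqref{eq:v_k_recursion}--\eqref{eq:S_k_recursion}. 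You instead invoke abstract correctness of the scan plus uniqueness of the quadratic representation \eqref{eq:V_k_LQT} to conclude $S_k=J_{k,T+1}$, $v_k=\eta_{k,T+1}$. That shortcut is legitimate and buys you freedom from the Woodbury computation, at the price of outsourcing the Riccati match to the known sequential result of Section II-B; the paper's induction is self-contained and doubles as an explicit check that the parallel combination rule reproduces the Riccati recursion. Two of your glosses need repair for the shortcut to stand. First, the terminal element here is \emph{not} the Theorem~\ref{the:gen-init} convention $V_{T\to T+1}\triangleq V_T(x_T)$ (constant in $x_{T+1}$): with $A_{T,T+1}=C_{T,T+1}=0$ the dual is $\mathrm{z}+\tfrac{1}{2}x_T^\top J_{T,T+1}x_T-x_T^\top\eta_{T,T+1}-\lambda^\top x_{T+1}$, which is emphatically not ``independent of $x_{T+1}$''; maximising over $\lambda$ makes it $+\infty$ off $x_{T+1}=0$. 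So \eqref{eq:parallel_V_k} does not apply ``directly''; you need the one-line observation that evaluation at $x_{T+1}=0$ commutes with the scan (the operator minimises only over intermediate states) together with \eqref{eq:vk-comb}. Second, your claim that setting $x_{T+1}=0$ ``kills the $\lambda$-dependent terms'' requires knowing $A_{k,T+1}=b_{k,T+1}=C_{k,T+1}=0$ for all $k$ --- immediate from \eqref{eq:lqt_comb}, but it must be stated, and since $C_{k,T+1}$ is the zero matrix, ``reading off the $C_{k,T+1}^{-1}$-free part'' is not meaningful. Finally, your hedge ``up to the terminal weight $X_T$'' brushes against a genuine typo in the statement: consistency with $v_T=H_T^\top X_T r_T$ requires $\eta_{T,T+1}=H_T^\top X_T r_T$ rather than $H_T^\top r_T$ (the paper's appendix carries the same slip).
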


\begin{proof}
This lemma is proved in Appendix \ref{sec:Proof_parallel_LQT_append}.
\end{proof}

Once we obtain $v_{k+1}$ and $S_{k+1}$ using Lemma \ref{lem:LQT_assoc}, we can compute the optimal control $u_k$ using \eqref{Control_LQT}. 

\begin{rem} \label{lem:LQT_init2}
If we are interested in evaluating the conditional value functions $V_{S \to k}(x_S,x_{k})$ for a given $x_S$, then we can also directly initialise by
\begin{equation}
\begin{split}
A_{S-1,S} &= 0, \\
b_{S-1,S} &= x_S, \\
C_{S-1,S} &= 0, \\
\eta_{S-1,S} &= 0, \\
J_{S-1,S} &= 0.
\end{split}
\label{eq:lqt_init_remark}
\end{equation}
\end{rem}

\subsection{Optimal trajectory recovery} \label{sec:lqt_forward}

We proceed to explain how the two optimal trajectory recovery methods explained in Section \ref{subsec:Optimal_trajectory} work for the LQT problem.

\subsubsection{Method 1}

Plugging the optimal control law \eqref{Control_LQT} into the dynamic equation in \eqref{eq:LQT_problem}, the optimal trajectory function in \eqref{eq:optimal_trajectory} becomes
\begin{equation}
f_{k}^{*}(x_{k})=\widetilde{F}_{k}x_{k}+\widetilde{c}_{k},
\end{equation} 
where
\begin{align}
\widetilde{F}_{k}&=F_{k}-L_{k}K_{k}\\
\widetilde{c}_{k}&=c_{k}+L_{k}K_{k}^{v}v_{k+1}-L_{k}K_{k}^{c}c_{k}.
\end{align}
We denote a conditional optimal trajectory from time step $k$ to $i$ as
\begin{align}
f_{k\rightarrow j}^{*}(x_{k},x_{j})&=\left(f_{j-1}^{*}\circ\ldots\circ f_{k+1}^{*}\circ f_{k}^{*}\right)(x_{k})\\
&=\widetilde{F}_{k,j}x_{k}+\widetilde{c}_{k,j}. \label{eq:trajectory_element}
\end{align}
\begin{lem} \label{lem:LQT_trajectory_combination}
	Given two elements $f_{k\rightarrow j}^{*}(x_{k},x_{j})$ and $f_{j\rightarrow i}^{*}(x_{j},x_{i})$ of the form \eqref{eq:trajectory_element}, their combination $f_{k\rightarrow i}^{*}(x_{k},x_{i})$, given by Definition \ref{def:trajectory-op}, is a function $f_{k\rightarrow i}^{*}(x_{k},x_{i})$ of the form \eqref{eq:trajectory_element} with
	\begin{align}
	\widetilde{F}_{k,i}&=\widetilde{F}_{j,i}\widetilde{F}_{k,j}, \label{eq:F_trajectory}\\
	\widetilde{c}_{k,i}&=\widetilde{F}_{j,i}\widetilde{c}_{k,j}+\widetilde{c}_{j,i}. \label{eq:c_trajectory}
	\end{align}	
\end{lem}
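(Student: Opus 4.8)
The plan is to compute the combination directly from Definition~\ref{def:trajectory-op} by substituting the two affine forms and reading off the coefficients of the resulting affine map. Recall that the operator for trajectory recovery is reversed-order composition, so with $a = f_{k\rightarrow j}^{*}$ and $b = f_{j\rightarrow i}^{*}$ we have $a \otimes b = f_{j\rightarrow i}^{*} \circ f_{k\rightarrow j}^{*}$; the only point requiring care is keeping this reversal straight, since it determines which factor multiplies which on the left.

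First I would write the two elements in the form \eqref{eq:trajectory_element}, namely $f_{k\rightarrow j}^{*}(x_{k}) = \widetilde{F}_{k,j} x_{k} + \widetilde{c}_{k,j}$ and $f_{j\rightarrow i}^{*}(x_{j}) = \widetilde{F}_{j,i} x_{j} + \widetilde{c}_{j,i}$. Then I would evaluate the composition by feeding the output of the first map as the argument of the second:
\begin{equation*}
\left(f_{j\rightarrow i}^{*} \circ f_{k\rightarrow j}^{*}\right)(x_{k}) = \widetilde{F}_{j,i}\left(\widetilde{F}_{k,j} x_{k} + \widetilde{c}_{k,j}\right) + \widetilde{c}_{j,i}.
\end{equation*}
Expanding the product yields $\widetilde{F}_{j,i}\widetilde{F}_{k,j}\, x_{k} + \widetilde{F}_{j,i}\widetilde{c}_{k,j} + \widetilde{c}_{j,i}$, which is again affine in $x_{k}$.

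Finally I would match this expression against the target form $f_{k\rightarrow i}^{*}(x_{k}) = \widetilde{F}_{k,i} x_{k} + \widetilde{c}_{k,i}$ and identify the linear coefficient as \eqref{eq:F_trajectory} and the constant term as \eqref{eq:c_trajectory}. Closure within the affine class \eqref{eq:trajectory_element} is automatic, since the composition of two affine maps is affine, and associativity of the combination is inherited from associativity of function composition, already noted in Definition~\ref{def:trajectory-op}. There is essentially no obstacle here: the computation is a one-line substitution, and the only subtlety to watch is the reversed composition order, which is precisely what forces $\widetilde{F}_{j,i}$ to appear on the left in \eqref{eq:F_trajectory} and to multiply $\widetilde{c}_{k,j}$ in \eqref{eq:c_trajectory}.
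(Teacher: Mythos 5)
Your proof is correct and matches the paper's approach: the paper simply states that the result ``is direct by using function compositions,'' and your one-line substitution $\widetilde{F}_{j,i}(\widetilde{F}_{k,j}x_k + \widetilde{c}_{k,j}) + \widetilde{c}_{j,i}$ with coefficient identification is exactly that direct computation, with the reversed composition order from Definition~\ref{def:trajectory-op} correctly handled.
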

\begin{proof}
The proof of this lemma is direct by using function compositions. 
\end{proof}

How to recover the optimal trajectory using parallel scans is indicated in the following lemma.
\begin{lem} \label{lem:lqt_fwd_init}
	If we initialise the elements of the parallel scan as $a_{k} = f_{k\rightarrow k+1}^{*}(x_{k})$, with
	\begin{align}
	\widetilde{F}_{k,k+1}&=\widetilde{F}_{k}, \\
	\widetilde{c}_{k,k+1}&=\widetilde{c}_{k},
	\end{align}
	for $k\in\left\{ S+1,...,T-1\right\}$ , and, for $k=S$, we set $\widetilde{F}_{S,S+1}=0$ and $\widetilde{c}_{S,S+1}=\widetilde{F}_{S}x_{S}+\widetilde{c}_{S}$, then, 
	\begin{equation}
	a_{S}\otimes a_{S+1}\otimes\cdots\otimes a_{k-1}=x^*_{k}
	\end{equation}
	where $x^*_{k}$ is the state of the optimal trajectory at time step $k$. 
\end{lem}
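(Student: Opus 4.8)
The plan is to combine the affine-composition recursion of Lemma~\ref{lem:LQT_trajectory_combination} with the special structure of the initial element $a_S$, and then establish the claim by induction on $k$. By Lemma~\ref{lem:LQT_trajectory_combination}, the partial combination $a_S \otimes a_{S+1} \otimes \cdots \otimes a_{k-1}$ is again an affine function of the form \eqref{eq:trajectory_element}, namely $\widetilde{F}_{S,k}\, x + \widetilde{c}_{S,k}$, whose parameters obey the recursions \eqref{eq:F_trajectory}--\eqref{eq:c_trajectory} with the index identification $k \to S$, $j \to k$, $i \to k+1$. The key observation is that the chosen initialisation sets $\widetilde{F}_{S,S+1} = 0$, so the leading matrix is annihilated from the outset and the composed function collapses to a constant.

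First I would establish the base case. For $k = S+1$ the combination is just the single element $a_S$, which by construction has $\widetilde{F}_{S,S+1} = 0$ and $\widetilde{c}_{S,S+1} = \widetilde{F}_S x_S + \widetilde{c}_S$. Since $f_S^*(x_S) = \widetilde{F}_S x_S + \widetilde{c}_S$, the defining relation \eqref{eq:optimal_trajectory} gives $\widetilde{c}_{S,S+1} = x_{S+1}^*$, so the claim holds for $k = S+1$.

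Next I would carry out the inductive step. Assuming that for some $k$ the combination satisfies $\widetilde{F}_{S,k} = 0$ and $\widetilde{c}_{S,k} = x_k^*$, I append the element $a_k$, which has $\widetilde{F}_{k,k+1} = \widetilde{F}_k$ and $\widetilde{c}_{k,k+1} = \widetilde{c}_k$. Applying \eqref{eq:F_trajectory}--\eqref{eq:c_trajectory} yields $\widetilde{F}_{S,k+1} = \widetilde{F}_k \widetilde{F}_{S,k} = 0$ and $\widetilde{c}_{S,k+1} = \widetilde{F}_k \widetilde{c}_{S,k} + \widetilde{c}_k = \widetilde{F}_k x_k^* + \widetilde{c}_k = f_k^*(x_k^*)$, and \eqref{eq:optimal_trajectory} identifies this last quantity as $x_{k+1}^*$, completing the induction. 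Because $\widetilde{F}_{S,k} = 0$ at every stage, the resulting affine map is the constant function taking value $\widetilde{c}_{S,k} = x_k^*$ regardless of its argument, which is exactly the asserted identity $a_S \otimes \cdots \otimes a_{k-1} = x_k^*$.

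I do not anticipate a genuine obstacle, as the argument is a direct induction driven by the affine recursion of Lemma~\ref{lem:LQT_trajectory_combination}. The only point requiring care is the interpretation of the combined element: strictly it is an affine function $f_{S\to k}^*(\cdot)$, and one must observe that the vanishing of $\widetilde{F}_{S,k}$ is precisely what reduces it to a constant equal to the recovered state $x_k^*$, mirroring the general trajectory-recovery scheme of Lemma~\ref{lem:gen-trajectory} specialised to the affine LQT dynamics.
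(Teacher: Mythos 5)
Your proof is correct: the induction via the recursions \eqref{eq:F_trajectory}--\eqref{eq:c_trajectory}, with the base case $\widetilde{F}_{S,S+1}=0$, $\widetilde{c}_{S,S+1}=f_S^*(x_S)=x_{S+1}^*$ forcing the composed affine map to collapse to a constant, is exactly the direct argument the paper intends (it states Lemma~\ref{lem:lqt_fwd_init} without an explicit proof, treating it as an immediate consequence of Definition~\ref{def:trajectory-op} and Lemma~\ref{lem:LQT_trajectory_combination}). Your added care in identifying the constant affine function with the state value $x_k^*$ is a worthwhile clarification but does not change the approach.
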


\subsubsection{Method 2}
This method makes use of \eqref{eq:trajectory_alternative} to recover the optimal trajectory. It first runs a forward pass to compute  $V_{S\to k}(x_{S},x_{k})$ and then a backward pass to compute  $V_{k}(x_{k})$. Then, the optimal trajectory is obtained via the following lemma.  
\begin{lem} \label{lem:LQT_trajectory_method2}
Given $V_{k}(x_{k})$ of the form \eqref{eq:V_k_LQT} and $V_{S\to k}(x_{S},x_{k})$  of the form \eqref{eq:Vki_dual}, the state of the optimal trajectory at time step $k$, which is obtained using \eqref{eq:trajectory_alternative}, is
\begin{align}
x^*_{k}&=\left(I+C_{S,k}S_{k}\right)^{-1}\left(A_{S,k}x_{S}+b_{S,k}+C_{S,k}v_{k}\right).
\label{eq:lqt_method_2}
\end{align}
\end{lem}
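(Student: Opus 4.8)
The plan is to carry out the minimisation in \eqref{eq:trajectory_alternative} directly, exploiting the fact that $x_S$ is a fixed known vector, so that every term of $V_{S\to k}(x_S,x_k)$ depending only on $x_S$ — in particular the $\tfrac12 x_S^\top J_{S,k} x_S$ and $x_S^\top \eta_{S,k}$ contributions in \eqref{eq:g_ki} — becomes an additive constant when optimising over $x_k$ and can be discarded. What remains is a strictly convex quadratic in $x_k$ whose unique minimiser I will characterise by a stationarity condition and then massage into the stated form \eqref{eq:lqt_method_2}.

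First, as a warm-up I would treat the tractable case in which $C_{S,k}$ is invertible, where \eqref{eq:V_ki_invertible_C} supplies $V_{S\to k}(x_S,x_k)$ in closed form. Writing $m = A_{S,k}x_S + b_{S,k}$ and adding $V_k(x_k)$ from \eqref{eq:V_k_LQT}, the $x_k$-dependent part of the objective is $\tfrac12(x_k-m)^\top C_{S,k}^{-1}(x_k-m) + \tfrac12 x_k^\top S_k x_k - v_k^\top x_k$. Setting the gradient to zero gives the normal equation $(C_{S,k}^{-1}+S_k)\,x_k = C_{S,k}^{-1}m + v_k$, so $x_k = (C_{S,k}^{-1}+S_k)^{-1}(C_{S,k}^{-1}m+v_k)$. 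Applying the identity $(C_{S,k}^{-1}+S_k)^{-1} = (I+C_{S,k}S_k)^{-1}C_{S,k}$ then turns this into $x_k = (I+C_{S,k}S_k)^{-1}(m+C_{S,k}v_k)$, which is exactly \eqref{eq:lqt_method_2}.

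The main obstacle is that in general $C_{S,k}$ (and likewise $S_k$) need not be invertible — this is precisely why the dual parameterisation \eqref{eq:Vki_dual}--\eqref{eq:g_ki} is retained — so the closed form above is not directly usable. To handle the general case I would keep $V_{S\to k}$ in its dual form and perform the joint optimisation $\min_{x_k}\max_{\lambda}\{g_{S\to k}(\lambda;x_S,x_k)+V_k(x_k)\}$. The inner objective is affine (hence concave) in $\lambda$ with Hessian $-C_{S,k}\le 0$ and convex in $x_k$ with Hessian $S_k\ge 0$, so a saddle point exists and $\min$ and $\max$ may be exchanged. The saddle point is then pinned down by the two stationarity conditions $\nabla_{x_k}=S_k x_k - v_k - \lambda = 0$ and $\nabla_{\lambda}=-C_{S,k}\lambda - (x_k - A_{S,k}x_S - b_{S,k}) = 0$. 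Substituting $\lambda = S_k x_k - v_k$ from the first into the second yields $(I+C_{S,k}S_k)\,x_k = A_{S,k}x_S + b_{S,k} + C_{S,k}v_k$, reproducing \eqref{eq:lqt_method_2} without ever inverting $C_{S,k}$ or $S_k$ individually.

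Finally, I would confirm that $I+C_{S,k}S_k$ is always invertible, so that \eqref{eq:lqt_method_2} is well defined and the minimiser unique. Letting $C_{S,k}^{1/2}$ denote the symmetric positive-semidefinite square root, the nonzero eigenvalues of $C_{S,k}S_k$ coincide with those of $C_{S,k}^{1/2}S_k C_{S,k}^{1/2}\ge 0$ and are therefore nonnegative; hence every eigenvalue of $I+C_{S,k}S_k$ is at least $1$. This step is routine but worth recording, since it certifies that the recovered $x^*_k$ in \eqref{eq:lqt_method_2} exists and is unique.
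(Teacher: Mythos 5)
Your proof is correct, and although it solves the same saddle-point stationarity system as the paper, you eliminate variables in the opposite order, and that difference is substantive. The paper's proof in Appendix~\ref{app:lqt_traj} first sets the gradient with respect to $x_k$ to zero and solves $x_k = S_k^{-1}(\lambda + v_k)$, then maximises over $\lambda$ through $\left(C_{S,k}+S_k^{-1}\right)^{-1}$ — so it tacitly assumes $S_k$ is invertible and silently exchanges $\min_{x_k}$ and $\max_{\lambda}$. You instead substitute $\lambda = S_k x_k - v_k$ from the $x_k$-stationarity condition into the $\lambda$-stationarity condition, obtaining $\left(I + C_{S,k}S_k\right)x_k = A_{S,k}x_S + b_{S,k} + C_{S,k}v_k$ directly, without ever inverting $S_k$ or $C_{S,k}$ separately; since $S_k = J_{k,T+1}$ is in general only positive semidefinite (it is built from terms like $H_k^\top X_k H_k$, which can be singular for a wide $H_k$), your route covers cases the paper's printed computation does not. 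You also make rigorous two points the paper glosses over: the min--max exchange is legitimised because a stationary point of a differentiable convex--concave function is automatically a saddle point, and you exhibit such a point by solving the system; and the invertibility of $I + C_{S,k}S_k$ is certified by identifying the nonzero spectrum of $C_{S,k}S_k$ with that of $C_{S,k}^{1/2}S_k C_{S,k}^{1/2} \succeq 0$, so the formula \eqref{eq:lqt_method_2} is always well defined. Two minor remarks: the eigenvalue argument needs $C_{S,k} \succeq 0$, which does hold — it follows from the initialisation $C_{k,k+1} = L_k U_k^{-1} L_k^\top$ and preservation of positive semidefiniteness under the combination rule \eqref{eq:lqt_comb} — but deserves one explicit line; and your warm-up computation for invertible $C_{S,k}$, while a useful sanity check, is subsumed by the general argument and could be omitted.
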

\begin{proof}
The proof is provided in Appendix~\ref{app:lqt_traj}.
\end{proof}

\subsection{Extensions} \label{sec:extensions}

In this section, the aim is to discuss some straightforward extensions of the parallel LQT.

\subsubsection{Extension to stochastic control}

Although the extension of the general framework introduced in this article to stochastic control problems is hard, the stochastic LQT case follows easily. Stochastic LQT is concerned with models of the form
\begin{align}
x_{k+1} &= F_kx_k+c_k+L_ku_k + G_k w_k, \\
  C[u_{S:T-1}] &= \mathrm{E}\left[ \ell_T(x_T) + \sum_{n=S}^{T-1} \ell_n(x_n,u_n) \right],
\label{eq:ext_cost_fun}
\end{align}
where $\ell_T(x_T)$ and $\ell_n(x_n)$ are as given in \eqref{eq:LQT_problem}, and $w_k$ is a zero mean white noise process with covariance $Q_k$, $G_k$ is a given matrix, and $\mathrm{E}\left[\cdot\right]$ denotes expectation over the state trajectory. It turns out that due to certainty equivalence property of linear stochastic control problems \cite{Stengel_book94,Maybeck:1982b}, the optimal control is still given by \eqref{Control_LQT} and the solution exactly matches the deterministic solution, that is, it is independent of $Q_k$ and $G_k$. The optimal value functions both in deterministic and stochastic cases have the form \eqref{eq:V_k_LQT}, but the value of the (irrelevant) constant is different.

It also results from the certainty equivalence property that the optimal control solution to the partially observed linear (affine) stochastic control problem with function \eqref{eq:ext_cost_fun} and dynamic and measurement models
\begin{align}
x_{k+1} &= F_kx_k+c_k+L_ku_k + G_k w_k, \\
y_k &= O_k x_k + d_k + e_k,
\end{align}
where $y_k$ is a measurement, $O_k$ is a measurement model matrix, $d_k$ is a deterministic bias, and $e_k$ is a zero mean Gaussian measurement noise, is given by \eqref{Control_LQT}, where the state $x_k$ is replaced with its Kalman filter estimate.

\subsubsection{Extension to more general cost functions}

Sometimes (such as in the nonlinear case below) we are interested in generalising the cost function in \eqref{eq:LQT_problem} to the following form for $n < T$:
\begin{equation} \label{eq:generalised_cost}
\begin{split}
\ell_n(x_n,u_n) &= \frac{1}{2}(H_n x_n - r_n)^\top  X_n (H_n x_n - r_n) \\
  &+ (H_n x_n - r_n)^\top M_n (u_n - s_n) \\
  &+ \frac{1}{2} (u_n - s_n)^\top U_n (u_n - s_n) \\
  &= \frac{1}{2} \begin{bmatrix}
    H_n x_n - r_n \\
    u_n - s_n
  \end{bmatrix}^\top
  \begin{bmatrix}
    X_n & M_n \\
    M_n^\top & U_n
  \end{bmatrix}
  \begin{bmatrix}
    H_n x_n - r_n \\
    u_n - s_n
  \end{bmatrix}.
  \end{split}
\end{equation}
We can now transform \eqref{eq:generalised_cost} into the form \eqref{eq:LQT_problem} using the factorisation
\begin{equation}
\begin{split}
 \begin{pmatrix}
  X_n & M_n \\
  M_n^\top & U_n
  \end{pmatrix} 
  &=
  \begin{pmatrix}
  I & 0 \\
  U_n^{-1} M_n^\top & I
  \end{pmatrix}^\top \\
  &\times
  \begin{pmatrix}
  X_n - M_n U_n^{-1} M_n^\top & 0 \\
  0 & U_n
  \end{pmatrix}
  \begin{pmatrix}
  I & 0 \\
  U_n^{-1} M_n^\top & I
  \end{pmatrix}.
\end{split}
\end{equation}
We thus have
\begin{equation}
\begin{split}
  &\begin{pmatrix}
  I & 0 \\
  U_n^{-1} M_n^\top & I
  \end{pmatrix}
  \begin{pmatrix}
       H_n x_n - r_n \\
       u_n - s_n
   \end{pmatrix} \\
   &= 
  \begin{pmatrix}
       H_n x_n - r_n \\
       U_n^{-1} M_n^\top (H_n x_n - r_n) + u_n - s_n
  \end{pmatrix},
\end{split}
\end{equation}
and by defining
\begin{equation}
\begin{split}
  \tilde{u}_n &= U_n^{-1} M_n^\top (H_n x_n - r_n) + u_n - s_n, \\
  \tilde{F}_n &= F_n - L_n U_n^{-1} M_n^\top H_n, \\
  \tilde{c}_n &= c_n + L_n U_n^{-1} M_n^\top r_n + L_n s_n, \\
  \tilde{X}_n &= X_n - M_n U_n^{-1} M_n^\top, \\
  \tilde{U}_n &= U_n,
\end{split}
\end{equation}
we get a system of the form \eqref{eq:LQT_problem}. This system can then be solved for $(x_n,\tilde{u}_n)$, and the final control signal can be recovered via
\begin{equation}
  u_n = \tilde{u}_n - U_n^{-1} M_n^\top (H_n x_n - r_n) + s_n. \\
\end{equation}

\subsubsection{Extension to nonlinear control} \label{sec:nonlinear}
The equations for solving the LQT problem can be extended to approximately solve nonlinear LQT systems by performing iterated linearisations, as in \cite{li2004iterative}. Let us consider a system of the form
\begin{equation}\label{eq:nonlinear_system}
\begin{split}
x_{k+1} & =f_{k}(x_{k},u_{k}),\\
\ell_{n}(x_{n},u_{n}) & =\frac{1}{2}(h_{n}(x_{n})-r_{n})^{\top}X_{n}(h_{n}(x_{n})-r_{n})\\
& \quad+\frac{1}{2}(g_{n}(u_{n})-s_{n})^{\top}U_{n}(g_{n}(u_{n})-s_{n}),\\
\ell_{T}(x_{T}) & =\frac{1}{2}(h_{T}(x_{T})-r_{T})^{\top}X_{T}(h_{T}(x_{T})-r_{T}),
\end{split}
\end{equation}
where $f_{k}(\cdot)$, $g_{n}(\cdot)$ and $h_{n}(\cdot)$ are possibly nonlinear functions. 
Given a nominal trajectory $\ensuremath{\bar{x}_{k},\bar{u}_{k}}$ for $k\in{S,...,T}$, we can linearise the nonlinear functions using first-order Taylor series as
\begin{equation}\label{eq:nonlinear_linarisation}
\begin{split}
f_{k}(x_{k},u_{k})&\approx f_{k}(\bar{x}_{k},\bar{u}_{k})+J_{f_{k}}^{x}\,(x_{k}-\bar{x}_{k})+J_{f_{k}}^{u}\,(u_{k}-\bar{u}_{k})\\
h_{n}(x_{n})&\approx h_{n}(\bar{x}_{n})+J_{h_{n}}^{x}\,(x_{n}-\bar{x}_{n}),\\
g_{n}(u_{n})&\approx g_{n}(\bar{u}_{n})+J_{g_{n}}^{u}\,(u_{n}-\bar{u}_{n}),
\end{split}
\end{equation}
where $J_{f_{k}}^{x}$ represents the Jacobian of function $f_{k}(\cdot)$ evaluated at $\bar{x}_{k},\bar{u}_{k}$ with respect to variable $x$.

Starting with a nominal trajectory $\ensuremath{\bar{x}_{k}^{1},\bar{u}_{k}^{1}}$ for $k\in{S,\ldots,T}$, we linearise the system using \eqref{eq:nonlinear_linarisation}, obtain the value functions using parallel scans (see Lemma \ref{lem:LQT_assoc}), and obtain a new optimal trajectory $\bar{x}_{k}^{2}$ and control $\bar{u}_{k}^{2}$. Then, we can repeat this procedure of linearisation and optimal trajectory/control computation until convergence. The procedure may be initialised, for example, with $\bar{x}_{k}^{1}=0,\bar{u}_{k}^{1}=0$ or $\bar{x}_{k}^{1}=x_{S},\bar{u}_{k}^{1}=0$ $\forall k$.

\section{Implementation and computational complexity} \label{sec:implementation}

In this section, we first discuss the practical implementation of the methods in Section \ref{subsec:practical_implementation}. We then analyse the computational complexity in Section \ref{subsec:Computational_complexity}. Finally, we explain how to perform parallelisation in blocks in Section \ref{sec:block-processing}.

\subsection{Practical implementation of parallel control}\label{subsec:practical_implementation}

Given the associative operators and the elements, the solutions to the dynamic programming and trajectory prediction problems reduce to an initialisation step followed by a single call to a parallel scan algorithm routine parameterised by these operators and elements. Given the result of the scan, there can also be a result-extraction step which computes the final optimal control from the scan results. For example, the LQT control law computation consists of the following steps:
\begin{enumerate}
\item \emph{Initialisation:} Compute the elements $A_{k,k+1}$, $b_{k,k+1}$, $C_{k,k+1}$, $\eta_{k,k+1}$, and $J_{k,k+1}$ defined in Lemma~\ref{lem:LQT_assoc} for all $k$ in parallel.

\item \emph{Parallel scan:} Call the backward parallel scan routine and, as its arguments, give the initialised elements above along with pointer to the operator in Lemma~\ref{lem:LQT_combination}. This returns $V_{k}(x_{k})$ for all $k$, see \eqref{eq:LQT_lem_V_k} and \eqref{eq:LQT_lem_V_k2}.

\item \emph{Extraction:} Compute the optimal control using \eqref{Control_LQT} in parallel for all $k$.
\end{enumerate}

The control law for a finite-state control problem is initialised with the conditional value functions in Theorem~\ref{the:gen-init} and the parallel scan routine is given a pointer to the operator in Definition~\ref{def:gen-op}. The control law computation is finally done with \eqref{eq:det_control} using the value functions computed in parallel. 

Sometimes, we also need to compute the actual trajectory and the corresponding optimal controls forward in time. For example, in iterative non-linear extensions of LQT discussed in Section~\ref{sec:nonlinear} we need to linearise the trajectory with respect to the optimal trajectory and control obtained at the previous iteration. In this case, after computing the control laws, we need to do another computational pass. For example, in the LQT case when using Method 1 from Section~\ref{sec:lqt_forward}, we do the following:
\begin{enumerate}
\item \emph{Initialisation:} Compute the elements $\widetilde{F}_{k,k+1}$ and $\widetilde{c}_{k,k+1}$  using Lemma~\ref{lem:lqt_fwd_init} for all $k$ in parallel.

\item \emph{Parallel scan:} Call the forward parallel scan routine and, as its arguments, give the initialised elements above along with pointer to the operator given in Lemma~\ref{lem:LQT_trajectory_combination}.

\item \emph{Extraction:} The optimal trajectory can be extracted from the forward scan results as the elements $\widetilde{c}_{S,k}$, see Lemma \ref{lem:lqt_fwd_init}.
\end{enumerate}
The steps for Method 1 in the finite-state case are analogous, but the elements are initialised according to Lemma~\ref{lem:gen-trajectory} and the combination operator is given in Definition~\ref{def:trajectory-op}. 

When using Method 2 for optimal trajectory recovery, the operator is the same as in the backward computation for the control law. The parallel scan is done in the forward direction and the final results still need to be evaluated at $x_S$ unless initialisation is done using Remark~\ref{lem:LQT_init2}. Furthermore, after computing the backward and forward scans, we still need to compute the optimal states by using \eqref{eq:trajectory_alternative}, which in the case of LQT reduces to \eqref{eq:lqt_method_2}

\subsection{Computational complexity}\label{subsec:Computational_complexity}

We proceed to analyse the computational complexity of the proposed methods. For this purpose it is useful to assume that the computer that we have operates according to the PRAM (parallel random access machine) model of computation (see, e.g.,  \cite{Rauber:2013,Barlas:2015}). In this model, we assume that we have a bounded number $P$ of identical processors controller by a common clock with a read/write access to a shared random access memory. This model is quite accurate for multi-core CPUs and GPUs.

For simplicity of analysis, we assume that the number of processors is large enough (say $P \to \infty$), so that the number of processors does not limit the parallelisation. Thus, the parallel scan algorithm has a time-complexity (i.e, span-complexity) of $O(\log T)$ in the number of associative operations. In the following, we also take the dimensionality of the state into account and therefore the time-complexities not only depend on the number of time steps $T$, but also on the number of states $D_x$ and number of controls $D_u$ in finite state-space case, and dimensionalities of the state $n_x$ and control $n_u$ in the LQT case. We analyse complexity using $O(\cdot)$ notation, as it enables us to analyse computational complexity avoiding low-level operation details that are not relevant to this contribution \cite{Arora_book07}.

The combination rule computations can also be parallelised, and their complexity will therefore depend on whether they are parallelised or not. For example, if we do not parallelise the computations in the LQT combination rule given in Lemma~\ref{lem:LQT_combination}, then their complexity is $O(n_x^3)$ due to the matrix inverses (or equivalent LU-factorisations) involved. If we perform the parallelisation, these LU-factorisations can be performed in parallel in $O(n_x)$ span time \cite{Ortega:1988}. The following analysis is based on assuming that we in fact use parallel matrix routines to implement the operations at the combination steps, along with all the other steps.

For the parallel algorithms we obtain the following results.

\begin{lem}\label{lem:PRAM_finite}
In a PRAM computer with large enough number of processors ($P \to \infty$) and the finite-state control problem, the span time complexity of
\begin{itemize}
\item computing value functions and the control law is $O(\log D_u + (\log T) \, (\log D_x))$; 
\item recovering the trajectory is $O(\log T)$ with Method 1, and $O(\log D_u + \log D_x + (\log T) \, (\log D_x))$ with Method 2. 
\end{itemize}
\end{lem}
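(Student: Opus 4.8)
The plan is to analyze each operation by decomposing it into its constituent parallel primitives and counting the span (critical-path depth) of each, then combining them according to the PRAM work/span rules. The key primitives I will rely on are: (i) parallel reduction (a $\min$ or sum over $n$ items) has span $O(\log n)$; (ii) applying the parallel scan of Algorithm~\ref{alg:Parallel_scan} over $T$ elements costs $O(\log T)$ associative-operator applications along the critical path; and (iii) element-wise maps over independent indices have span $O(1)$. The whole argument is therefore a matter of identifying, for each high-level step, how many scan levels are incurred and how expensive a single associative operation is in the finite-state representation.

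First I would handle the \textbf{value-function / control-law} bound. Recall from Section~\ref{subsec:finite_state} that in the finite-state case each conditional value function $V_{k\to i}$ is a $D_x \times D_x$ matrix and the operator in Definition~\ref{def:gen-op}, $a\otimes b = \min_z\{V_a(x,z)+V_b(z,y)\}$, is a min-plus matrix product. A single such product forms, for each of the $D_x^2$ output entries, a minimisation over $D_x$ intermediate values $z$; with $P\to\infty$ these $D_x^2$ entries are computed in parallel and each inner $\min$-reduction has span $O(\log D_x)$, so one associative operation has span $O(\log D_x)$. The backward scan of \eqref{eq:parallel_V_k} performs $O(\log T)$ such operations along the critical path, giving $O((\log T)(\log D_x))$ for obtaining all value functions. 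The control-law extraction via \eqref{eq:det_control} is, for each state $x_k$, a single $\arg\min$ over the $D_u$ control values of $\ell_k(x_k,u_k)+V_{k+1}(f_k(x_k,u_k))$; done in parallel over all $k$ and all $x_k$, this contributes span $O(\log D_u)$. Adding the two phases gives $O(\log D_u + (\log T)(\log D_x))$.

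Next I would treat \textbf{trajectory recovery}. For Method~1, the associative element is the transition vector $f_k^*(\cdot)$ of dimension $D_x$ (Section~\ref{subsec:finite_state}), and the operator is function composition (Definition~\ref{def:trajectory-op}); composing two such finite-state maps is a single gather/look-up of depth $O(1)$, so the forward scan costs $O(\log T)\cdot O(1)=O(\log T)$. For Method~2, one must first obtain both $V_{S\to k}$ (forward scan) and $V_k$ (backward scan), each costing $O((\log T)(\log D_x))$ as above — but crucially the control-law machinery is not needed, and the analysis assumes the value functions are available, so the scans themselves dominate; I also add the $O(\log D_u)$ for any control-related minimisation and an $O(\log D_x)$ term for the final per-node minimisation in \eqref{eq:trajectory_alternative}, which is a $\min$ over the $D_x$ candidate states. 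Collecting terms yields $O(\log D_u + \log D_x + (\log T)(\log D_x))$, where the two isolated logarithms are lower-order relative to the scan term but are retained to reflect the extraction and node-minimisation costs explicitly.

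The main obstacle I anticipate is \textbf{bookkeeping the cost of a single associative operation} correctly and arguing that the $O(\log D_x)$ inner-reduction depth genuinely composes with the $O(\log T)$ scan depth multiplicatively rather than additively. This requires being careful that within each scan level the $O(D_x^2)$ min-plus entries are truly independent and executed simultaneously (so that only the reduction depth, not the entry count, enters the span), and that successive scan levels are serialised only through the operator, not through extra synchronisation. Once the per-operation span $O(\log D_x)$ for the min-plus product and $O(1)$ for function composition are established, the remaining combination is routine application of the additive rule for sequentially-composed phases and the multiplicative rule for a scan whose operator itself has nontrivial span; the distinction between Method~1 (cheap $O(1)$ operator, giving the clean $O(\log T)$) and Method~2 (reusing the value-function scans) then falls out directly.
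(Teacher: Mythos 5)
Your proposal is correct and takes essentially the same approach as the paper's proof: the span of a single associative operation ($O(\log D_x)$ for the min-plus combination in Definition~\ref{def:gen-op}, $O(1)$ for the function composition of Method~1) is multiplied by the $O(\log T)$ scan depth, with additive $O(\log D_u)$ and $O(\log D_x)$ terms for the control and final node minimisations. The only cosmetic difference is that the paper attributes the $O(\log D_u)$ term also (and, in Method~2, solely) to the initialisation of the elements $V_{k \to k+1}$ in \eqref{eq:gen-init}, which itself requires a minimisation over the control input, rather than to the control-law extraction \eqref{eq:det_control} alone; the resulting bounds are identical.
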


\begin{proof}
The initialisation of the value function computation is done using \eqref{eq:gen-init} which has a time (span) complexity of $O(\log D_u)$ due to the minimisation operation over the control input. The associative operator in Definition \ref{def:gen-op} is fully parallelisable in summation, but the span complexity of the minimisation over the state is $O(\log D_x)$. The control law computation \eqref{eq:det_control} also has the complexity $O(\log D_u)$ and hence the total span complexity follows. For trajectory recovery with Method 1, we notice that each of the steps of initialisation and associative operator application are fully parallelisable. In Method 2, the initialisation and value function computation have the same complexity as in the backward value function computation and the minimisation at the final combination step takes $O(\log D_x)$ time.
\end{proof}

\begin{lem}\label{lem:PRAM_LQT}
In a PRAM computer with large enough number of processors ($P \to \infty$) and the LQT problem, the span time complexity of
\begin{itemize}
\item computing value functions and the control law is $O(n_u + n_x \, \log T)$;
\item recovering the trajectory is $O(\log n_u + (\log T) \, (\log n_x))$ with Method 1, and $O(n_x + n_x \, \log T)$ with Method 2. 
\end{itemize}
\end{lem}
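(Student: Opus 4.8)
The plan is to mirror the three-phase structure (initialisation, parallel scan, extraction) already used in the proof of Lemma~\ref{lem:PRAM_finite}, but now to track the two parallel primitives whose span costs drive everything in the LQT case: a parallel LU-based inversion of an $m \times m$ matrix, which takes $O(m)$ span \cite{Ortega:1988}, and a parallel matrix product whose contracted inner dimension is $m$, which takes $O(\log m)$ span via a tree reduction. The first primitive produces the linear-in-dimension terms and the second produces the logarithmic terms, so the entire argument reduces to locating, in each phase, the dominant inversion dimension and the dominant contraction dimension.

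For the value functions and control law I would proceed as follows. The initialisation \eqref{eq:lqt_init} is dominated by forming $C_{k,k+1} = L_k U_k^{-1} L_k^\top$, which requires inverting the $n_u \times n_u$ matrix $U_k$ at cost $O(n_u)$; the remaining products and the quantities $\eta_{k,k+1}, J_{k,k+1}$ only contract over $n_x$ or $n_u$ and are logarithmic, and since all $k$ are independent this phase is $O(n_u)$. The backward scan applies the combination rule \eqref{eq:lqt_comb} over $O(\log T)$ sequential levels, and each combination is dominated by inverting the $n_x \times n_x$ matrix $I + C_{k,j} J_{j,i}$ at cost $O(n_x)$, so the scan contributes $O(n_x \log T)$. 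Finally the extraction \eqref{Control_LQT}, run in parallel over $k$, is dominated by inverting the $n_u \times n_u$ matrix $L_k^\top S_{k+1} L_k + U_k$, i.e.\ $O(n_u)$. Summing the three phases gives $O(n_u + n_x \log T)$.

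For trajectory recovery I would treat the two methods separately, taking the control laws as already computed. In Method~1 the elements of Lemma~\ref{lem:lqt_fwd_init} are formed from inversion-free products such as $L_k K_k$ and $L_k K_k^v v_{k+1}$, whose contracted dimensions are $n_u$ and $n_x$, giving span $O(\log n_u + \log n_x)$; the scan then uses the purely multiplicative composition rule \eqref{eq:F_trajectory}--\eqref{eq:c_trajectory}, whose $n_x \times n_x$ products cost $O(\log n_x)$ per level over $O(\log T)$ levels, so the scan is $O((\log T)(\log n_x))$. Since $\log n_x \le (\log T)(\log n_x)$ for $T \ge 2$, the total collapses to $O(\log n_u + (\log T)(\log n_x))$. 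Method~2 instead runs a forward scan with the full combination rule \eqref{eq:lqt_comb}, which again carries an $n_x \times n_x$ inversion at each of its $O(\log T)$ levels for a cost of $O(n_x \log T)$, followed by the per-node evaluation \eqref{eq:lqt_method_2} whose inversion of $I + C_{S,k} S_k$ is $O(n_x)$ in parallel over $k$; this yields $O(n_x + n_x \log T)$.

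The crux, and the only place where care is genuinely needed, is the contrast between the two trajectory methods, which is precisely the contrast between function composition and the Riccati-type combination: the associative operator of Method~1 is inversion-free, so its per-level span collapses to the logarithmic matrix-multiply cost and only the initialisation contributes an $n_u$-dependent (yet still logarithmic) term, whereas Method~2 inherits the $n_x \times n_x$ inversion of \eqref{eq:lqt_comb} at every scan level and is therefore linear in $n_x$. Getting the stated bounds right thus hinges entirely on correctly attributing $O(m)$ span to each LU-inversion and $O(\log m)$ span to each contraction, and on recognising that it is the control-law phase, not the trajectory phase, where the $n_u$-dimensional inversions actually reside.
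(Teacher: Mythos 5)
Your proposal is correct and follows essentially the same route as the paper's proof: both rest on the two parallel primitives from \cite{Ortega:1988} ($O(m)$ span for an $m \times m$ LU factorisation, $O(\log m)$ span for a matrix product), attribute the $O(n_u)$ terms to the $U_k$ inversion in the initialisation and the $n_u \times n_u$ inversion in the control-law extraction, the $O(n_x \log T)$ term to the $n_x \times n_x$ inversions inside the combination rule \eqref{eq:lqt_comb}, and the purely logarithmic cost of Method~1 to its inversion-free composition rule \eqref{eq:F_trajectory}--\eqref{eq:c_trajectory}. You are in fact somewhat more explicit than the paper about exactly which inversion dominates each phase, but the decomposition and the accounting are identical.
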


\begin{proof}
A product of $n \times n$ matrices can be computed in parallel in $O(\log n)$ time, and an $n \times n$ LU factorisation can be computed in parallel in $O(n)$ time \cite{Ortega:1988}. Hence the initialisation requires $O(n_u)$ time as the contribution of the matrix products is negligible. The time complexity of the associative operator is dominated by the LU factorisations which take $O(n_x)$ time and the matrix factorisations at the control law computation can be performed in $O(n_u)$ time. The matrix products required in initialisation have negligible effect and therefore, the total complexity follows. In trajectory recovery Method 1, the matrix products at the initialisation can be computed in $O(\log n_u)$ time and the associative operators in $O(\log n_x)$ time. In Method 2, the initialisation is again negligible, and associative operations take $O(n_x)$ time, and the final combination $O(n_x)$ time.
\end{proof}

It should be noted that, according to Lemmas \ref{lem:PRAM_finite} and \ref{lem:PRAM_LQT}, Method 1 is computationally more efficient than Method 2 for large $D_u$ or large $D_x$ in the discrete case, and for large $n_x$ and $n_u$ in the LQT case. Nevertheless, a benefit of Method 2 is that it can be run in parallel with the backward pass.

Although the above analysis results give a guideline for performance in large number of processors (computational cores), with finite number of processors, we can expect worse performance as we cannot allocate a single task to single processor. However, at the time of writing the typical number of cores in a GPU was already $\sim$10k, and therefore the above analysis can be expected to become more and more accurate in the future with the steadily increasing number of computational cores. 

\subsection{Block processing} \label{sec:block-processing}

Up to now, we have considered parallelisation of control problems at a single time step level. That is, we initialise the element $a_k$ for all $k$ using \eqref{eq:gen-init} and then apply the parallel scan algorithm. Another option is to apply the parallel scan algorithm to non-overlapping blocks of $B$ time steps. That is, we can initialise the elements of the parallel scan with $V_{k \to k+B}$ for $k=S+nB$, with $n=0,1,\ldots,T/B-1$. The initialisation of each element can be done using \eqref{eq:vki-comb} in $B-1$ sequential steps. Then, we can apply the parallel scan algorithm to fuse the information from all blocks. In this case, the time-complexity in a PRAM computer with large enough number of processors is $O(B + \log (N/B))$, so it is optimal to choose $B=1$. Nevertheless, this approach can be useful if we have a limited number of processors.

In the case of LQT, an efficient implementation of the above can achieved by using partial condensing \cite{Axehill15,Frison16}, where the idea is to reformulate the problem in terms of blocks of states and controls of size $B$:
\begin{equation}
\begin{split}
  \bar{x}_{k/B} = \begin{bmatrix}
    x_{k} \\
    \vdots \\
    x_{k+B-1}
  \end{bmatrix}, \quad
  \bar{u}_{k/B} = \begin{bmatrix}
    u_{k} \\
    \vdots \\
    u_{k+B-1}
  \end{bmatrix}.
\end{split}
\end{equation}
By elimination of the state variables inside each block, we can reformulate the problem in terms of the initial states of blocks only, $x_0, x_{B}, x_{2B},\ldots,$ which reduces the state dimensions from full state blocks to the original state dimension. The resulting problem is still a LQT problem, but with modified states and inputs, and hence we can use the proposed parallel LQT algorithms to solve it. 

\section{Experimental Results} \label{sec:experimental}

In this section, we experimentally evaluate the performance of the methods in simulated applications. We implemented the methods using the open-source TensorFlow 2.6 software library \cite{Abadi_et_al:2015} using its Python 3.8 interface, which provides means to run parallel vectorised operations and parallel associative scans on GPUs. The experiments were run using NVIDIA A100-SXM GPU with 80GB of memory. In the experiments, we concentrate on the computational speed benefits because the sequential and parallel version of the algorithm compute exactly the same solution (provided that it is unique), the only difference being in the computational speed. The computation speeds were measured by averaging over 10 runs and the time taken to (jit) compile the code was not included in the measurement. 

We would like to point out that, as we are using TensorFlow with GPUs, the matrix operations on the individual time steps of the sequential algorithms are parallelised. Therefore, the sequential LQT algorithms can be interpreted as a TensorFlow parallel implementation of the Riccati recursion \cite{Frison13}.

\subsection{Experiment with basic LQT} \label{sec:basic_lqt_exp}

\begin{figure}[tb]
\centering
\includegraphics{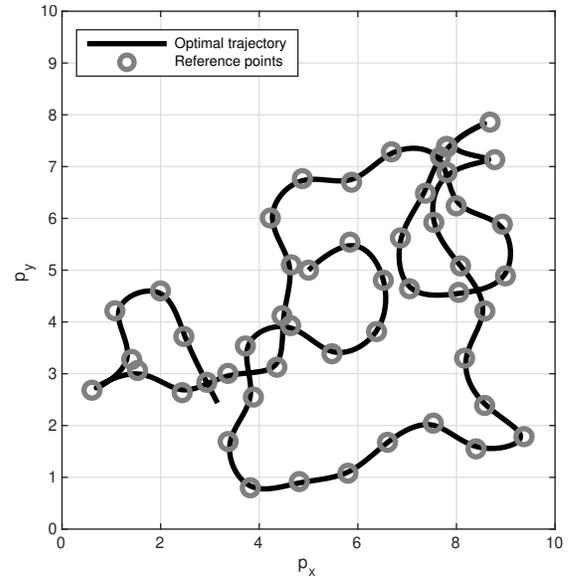}
\caption{Simulated trajectory from the linear control problem and optimal trajectory produced by LQT (see Section \ref{sec:basic_lqt_exp}). The trajectory starts at $(5,5)$.}
\label{fig:lqt_trajectory}
\end{figure}

The aim of the experiment is to demonstrate the benefit of the proposed parallelization method over the classical sequential solution in an LQT problem. We consider a 2-D tracking problem obeying Newton's law \cite[Example 4.4.2]{Lewis+Syrmos:1995}. In this LQT problem, the aim is to steer an object to follow a given trajectory of reference points in 2-D by using applied forces as the control signals. 

The state consists of the positions and velocities $x = \begin{bmatrix} p_x & p_y & v_x & v_y \end{bmatrix}^\top$ and the control signal $u = \begin{bmatrix} a_x & a_y \end{bmatrix}^\top$ contains the accelerations (forces divided by the mass which is unity in our case). If we assume that the control signal is kept fixed over each discretisation interval $[t_k,t_{k+1}]$, then the dynamic model can be written as
\begin{equation}
  x_{k+1} = F_k \, x_k + L_k \, u_k,
\end{equation}
where
\begin{equation}
  F_k = \begin{bmatrix} 1 & 0 & \Delta t_k &  0 \\ 0 & 1 &  0 & \Delta t_k \\ 0 & 0 &  1 &  0 \\ 0 & 0 & 0 & 1 \end{bmatrix}, 
  L_k = \begin{bmatrix}  \Delta t_k^2/2 & 0 \\ 0 & \Delta t_k^2/2 \\ \Delta t_k & 0 \\  0 & \Delta t_k \end{bmatrix},
\end{equation}
and $\Delta t_k = t_{k+1} - t_k$ is the sampling interval. Fig.~\ref{fig:lqt_trajectory} illustrates the scenario.

\begin{figure}[tb]
\centering
\includegraphics[width=0.49\columnwidth]{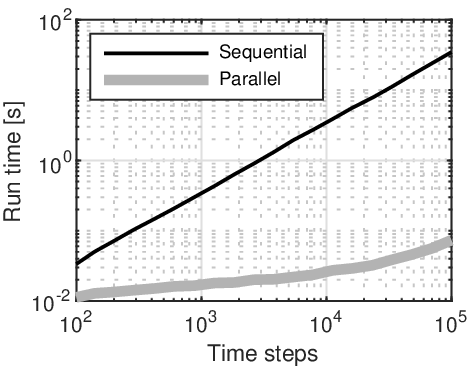}
\includegraphics[width=0.49\columnwidth]{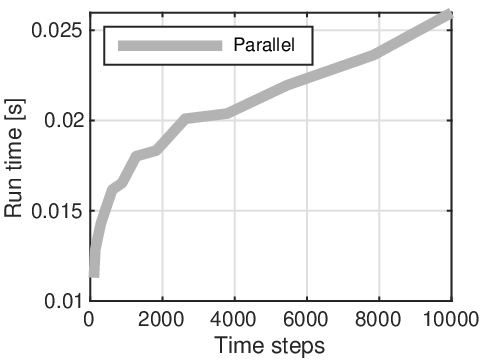}
\includegraphics[width=0.9\columnwidth]{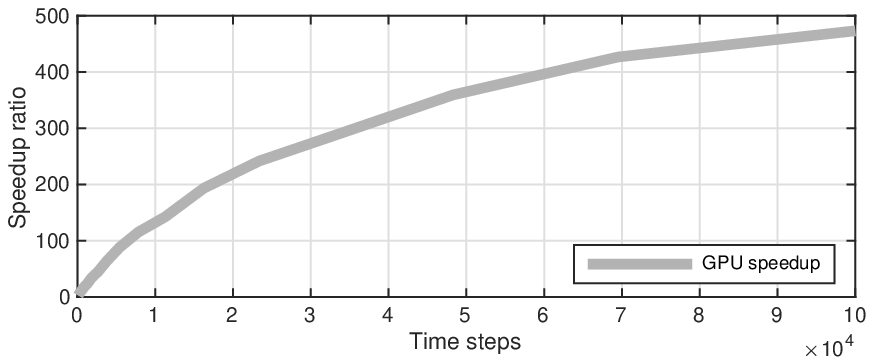}
\caption{LQT control law computation run times on GPU. The sequential and parallel run times are shown in the top left figure, and a zoom to the parallel run time is shown in the top right figure. The speed-up provided by the parallelisation is shown in figure at the bottom.}
\label{fig:track_backward_gpu}
\end{figure}

\begin{figure}[tb]
\centering
\includegraphics[width=0.49\columnwidth]{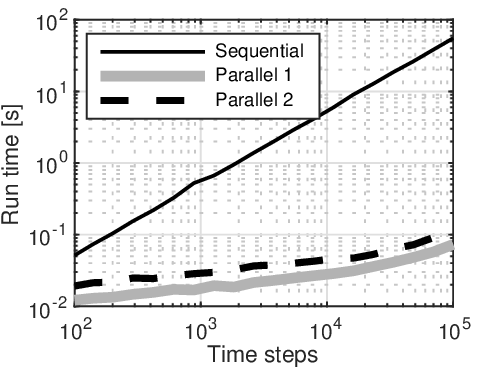}
\includegraphics[width=0.49\columnwidth]{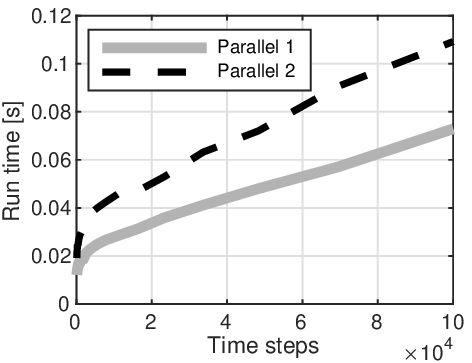}
\caption{The GPU run times (left) and zoomed run times of the parallel methods (right) for combined control law computation and trajectory recovery (Methods 1 \& 2) in LQT.}
\label{fig:track_backward-forward_gpu}
\end{figure}

The dynamic trajectory is discretized so that we add 10 intermediate steps between each of the reference point time steps which then results in a total of $T$ times steps (giving $\Delta t_k = 0.1$). The cost function parameters are for $k=0,\ldots,T-1$ selected to be
\begin{equation}
  H_k = \begin{bmatrix}
    1 & 0 & 0 & 0 \\
    0 & 1 & 0 & 0
  \end{bmatrix}, X_k = \begin{bmatrix}
    c_k & 0 \\
    0 & c_k
  \end{bmatrix}, 
  U_k = 10^{-1} \, I_{2 \times 2},
\end{equation}
where $c_k = 100$ when there is a reference point at step $k$ and $10^{-6}$ otherwise. At the final step we set $H_T = I_{4 \times 4}$ and $X_T = I_{4 \times 4}$. The reference trajectory contains the actual reference points at every 10th step $k$, and the intermediate steps are set equal to the previous reference point. At the final step the reference velocity is also zero. Together with the value $c_k = 10^{-6}$ at these intermediate points this results in tiny regularisation of the intermediate paths, but the effect on the final result is practically negligible. It would also be possible to put $c_k = 0$ for the intermediate steps to yield almost the same result.

The results computing the control law (i.e, the backward pass) of the classic sequential LQT and the proposed parallel LQT on the GPU for $T = 10^2,\ldots,10^5$ are shown in Fig.~\ref{fig:track_backward_gpu}. The figure shows the run times of both in log-log scale on the top right figure and the parallel result up to $10^4$ is shown in linear scale on the top right. The speed-up, computed as the ratio of sequential and parallel run times, is shown in the bottom figure. It can be seen that the parallel version is significantly faster than the sequential version (illustrated in the top left figure) and the logarithmic scaling of the parallel algorithm can also be seen (illustrated in the top right figure). The speed-up (illustrated in the bottom figure) is of the order of $\sim$470 with $10^5$ time points and although it is close to saturating, it still increases a bit.

We also ran the combined control law computation pass and the trajectory recovery pass on GPU, and the results are shown in Fig.~\ref{fig:track_backward-forward_gpu}. The advantage of parallel version over the sequential version can again be clearly seen. Furthermore, in this case, Method 1 for the parallel trajectory recovery is faster than Method 2.

\begin{figure}[tb]
\centering
\includegraphics[width=0.9\columnwidth]{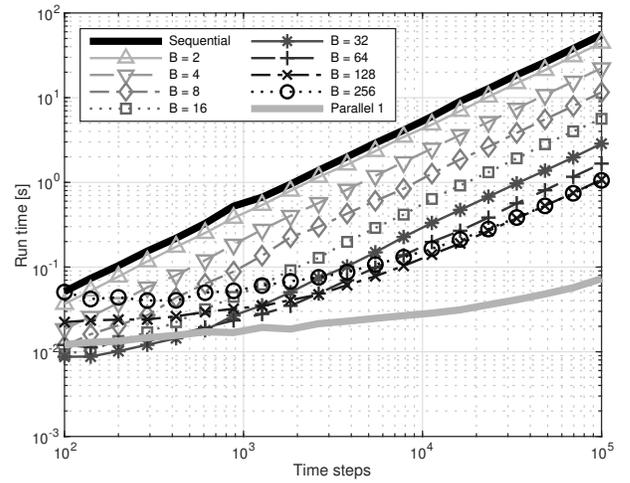}
\caption{Result of partial condensing with different values of block size $B$ when the LQT is implemented by using parallel matrix operations within sequential Riccati solution and trajectory reconstruction.}
\label{fig:track_gpu_seq_cond_time}
\end{figure}

\begin{figure}[htb]
\centering
\includegraphics[width=0.9\columnwidth]{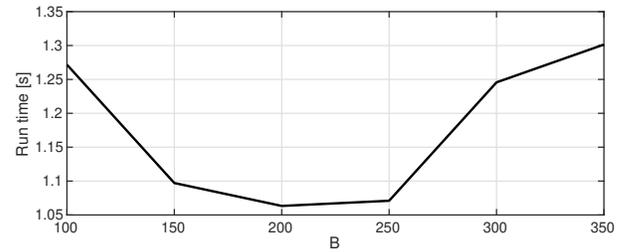}
\caption{Run times of partial condensing for $T = 10^5$ with different values of block size $B$ when the LQT is implemented by using parallel matrix operations within sequential Riccati solution and trajectory reconstruction.}
\label{fig:track_gpu_seq_cond_nc}
\end{figure}

\subsection{Experiment with partial condensing}
\textcolor{black}{} %
In this experiment we compare the proposed parallelisation method for LQT to partial condensing \cite{Axehill15,Frison16} based parallelisation. We also demonstrate how parallel condensing can be combined with the proposed methodology to yield improved results. The same 2-D tracking problem and data are used as in Sec.~\ref{sec:basic_lqt_exp}. 

As discussed in Sec.~\ref{sec:block-processing}, the idea of partial condensing is to reduce a control problem of length $T$ to a control problem of length $T/B$ by dividing the problem into blocks of length $B$. In each of these blocks we can eliminate all but the state at the beginning of the block, which effectively reduces a control problem with state dimension $n_x$, input dimension $B \, n_u$, and length $T/B$. The required computations are 1) conversion of the model into block form, 2) computation of LQT solution of length $T/B$, and 3) reconstruction of the intermediate states in each block. Partial condensing allows for parallelisation of the computations because steps 1) and 3) are fully parallelisable and step 2) can be efficiently implemented by using parallel matrix operations within the sequential Riccati solution and trajectory reconstruction \cite{Axehill15,Frison16,Frison13}.

The GPU run times of the aforementioned parallel condensing method with $B = 2,4,8,\ldots,256$ are shown Fig.~\ref{fig:track_gpu_seq_cond_time}. The run times of the classical backward-forward sequential LQT solution for trajectory recovery and of the proposed parallel method with trajectory recovery with Method 1 are also shown in the figure. The trajectory lengths were $T = 10^2,\ldots,10^5$. It can be seen that the run times of partial condensing methods are significantly lower than of the classical sequential LQT while still, for the most of the cases, higher than of the proposed parallel method. It can be seen that with short trajectory lengths the partial condensing method is faster than the proposed parallel method when $B = 16$ or $32$. However, with larger trajectory lengths the proposed parallel method is faster.

In the results of Fig.~\ref{fig:track_gpu_seq_cond_time} we can see some evidence of an effect that when increasing $B$, the run time no longer decreases after a certain value. This is confirmed in Fig.~\ref{fig:track_gpu_seq_cond_nc} which shows the run times of the partial condensing over trajectory of length $T = 10^5$ with different values of $B$. It can be seen that the run time attains minimum somewhere around $B = 200$ and after that the run time starts to increase.

\begin{figure}[tb]
\centering
\includegraphics[width=0.9\columnwidth]{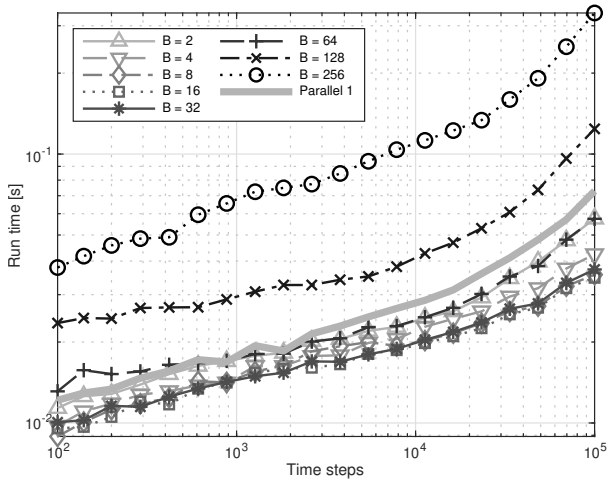}
\caption{Result of partial condensing with different values of $N_c$ when the LQT is implemented by using the proposed parallel methods.}
\label{fig:track_gpu_par_cond_time}
\end{figure}

\begin{figure}[htb]
\centering
\includegraphics[width=0.9\columnwidth]{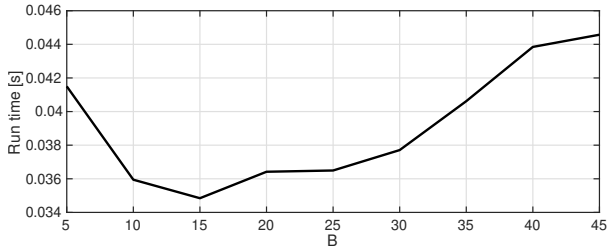}
\caption{Run times of partial condensing for $T = 10^5$ with different values of $B$ when the LQT is implemented by using the proposed parallel methods.}
\label{fig:track_gpu_par_cond_nc}
\end{figure}

As discussed in Sec.~\ref{sec:block-processing}, it is also possible to combine partial condensing with the proposed parallelization methodology. This can be done by implementing the LQT solution (of length $T/B$) by using one of the parallel methods. Fig.~\ref{fig:track_gpu_par_cond_time} shows the results of using this combined approach. It can be seen that the partial condensing can be used to improve run time of the proposed parallel methods when $B$ is in suitable range (2--64 in this case). When $B$ is too large ($128$ or $256$), then partial condensing no longer improves the run times.  This also happens with $B = 64$ when the trajectory length is short. Fig.~\ref{fig:track_gpu_par_cond_nc} shows the run times with $T = 10^5$ as function of $B$. It can be seen in the figure that the minimum run time is attained roughly at value $B = 15$.

\textcolor{black}{} %
\subsection{Experiment with increasing state dimensionality} \label{sec:spring_exp}
In this experiment the aim is to test the scaling of run time when the dimensionality of the state increases. For this purpose, we use a slight modification of a mass-spring-damper problem \cite{Englert:2019}\cite[Section 2.5.3]{kapernick2016gradient}, where we have removed the control constraints, as we do not consider them in this paper. This is a linear model for controlling a chain of $N$ masses $m = 1$ kg connected with springs with constants $c = 1$ kg/s$^2$ and dampers with constants $d = 0.2$ kg/s. The control is applied to the first and last mass. The model is thus (see Fig.~\ref{fig:masses}):
\begin{equation}
\begin{split}
  \ddot{y}_1 &= \frac{c}{m} \left[-2 y_1 + y_2\right] + \frac{d}{m} \left[-2 \dot{y}_1 + \dot{y}_2 \right] + \frac{1}{m} \, u_1, \\
  \ddot{y}_i &= \frac{c}{m} \left[y_{i-1} - 2 y_i + y_{i+1} \right] + \frac{d}{m} \left[ \dot{y}_{i+1} - 2 \dot{y}_i + \dot{y}_{i+1} \right], \\
  \ddot{y}_N &= \frac{c}{m} \left[-2 y_N + y_{N-1} \right] + \frac{d}{m} \left[-2 \dot{y}_N + \dot{y}_{N-1} \right] - \frac{1}{m} \,  u_2.
\end{split}
\end{equation}
where $i=2,\ldots,N-1$. The state of the system is $x = \begin{bmatrix} y_1 & \dot{y}_1 & \cdots & y_N & \dot{y}_N \end{bmatrix}^\top$. Similar to the case in \cite{kapernick2016gradient}, the aim is to control the system to origin from an initial condition where the first mass and middle mass, with index $i=\left\lfloor \frac{N}{2}\right\rfloor + 1 $, are started at position $1$ m. The model is uniformly discretised, with closed-form zero-order-hold (ZOH) discretisation, using varying number of time steps $T = 10^2,\ldots,10^3$ such that the total control interval length is $10$ s.

\begin{figure}[tb]
	\centering
	\includegraphics[width=0.9\columnwidth]{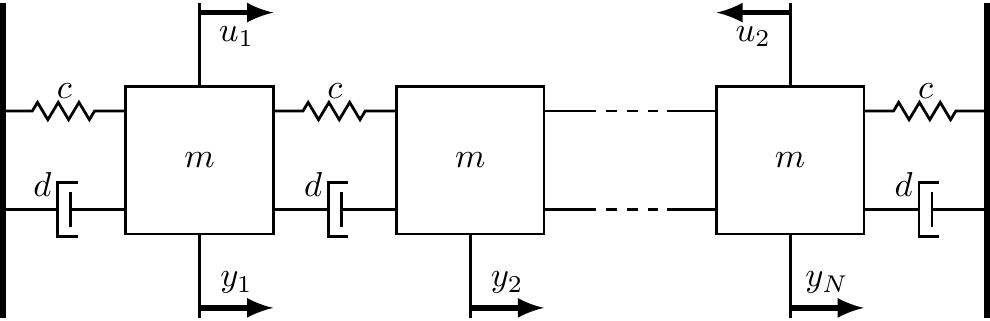}
	\caption{Illustration of the mass-spring-damper problem (see Section \ref{sec:spring_exp}).}
	\label{fig:masses}
\end{figure}

The cost function is
\begin{equation}
\begin{split}
  C[u_{0:T}] &= \frac{1}{2} x^\top_T X_T x_T + \frac{1}{2} \sum_{n=0}^{T-1} x^\top_n X x_n 
  + \frac{1}{2} \sum_{n=0}^{T-1} u^\top_n U u_n,
\end{split}
\end{equation}
with $X = X_T = I$ and $U = 0.1 I$.

\begin{figure}[tb]
\centering
\includegraphics[width=0.9\columnwidth]{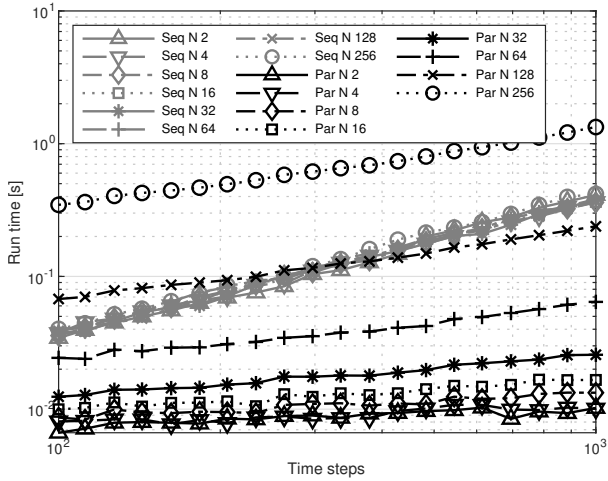}
\caption{Run times of the mass-spring-damper problem with different number of masses $N$ corresponding to state dimensionalities $2N$.}
\label{fig:mass_2}
\end{figure}

Fig.~\ref{fig:mass_2} shows the results of sequential LQT and proposed parallel LQT. Due to parallelisation of the matrix operations on the individual steps of the sequential LQT, its run time is essentially independent of the state dimension. The parallel method, however, experiences significant run time increase with larger state dimension. Although when the number of masses $N$ is 2--64, the run times of the parallel methods are shorter than those of the sequential method, with $N=128$ the parallel method is slower with small numbers of time steps and with $N=256$ it is slower with all the time step counts. 

\subsection{Experiment with finite state space}\label{sec:finite_exp}
\textcolor{black}{} %
In this experiment, we consider an aircraft routing problem \cite[Example 6.1.1]{Lewis+Syrmos:1995}, where an aircraft  proceeds from left to right, and the aim is to control up and down movement on a finite grid so that the total cost is minimised. Each of the grid points incurs a cost $ \{ 0,1,2 \}$ which is related to the fuel required to go through it. Taking a control up or down costs a single unit, and proceeding straight costs nothing. The scenario is illustrated in Fig.~\ref{fig:fsc_track}. 

\begin{figure}[tb]
\centering
\includegraphics{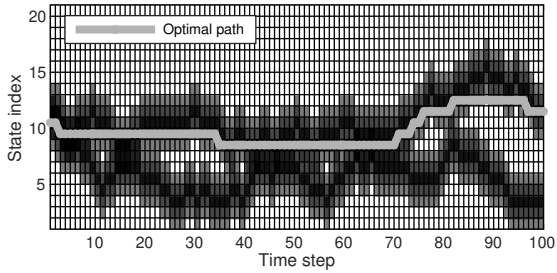}
\caption{Finite state space scenario where the aim is to find a minimum cost path from left to right by steering up or down (see Section \ref{sec:finite_exp}). The gray scale values show the cost function values.}
\label{fig:fsc_track}
\end{figure}

In this case we tested the finite-state control law computation using different state dimensionalities as the parallel combination rule can be expected to have a dependence on the state dimensionality when the number of computational cores is limited. The GPU speed-ups for state dimensions $D_x \in \{ 5, 11, 21 \}$ are shown in Fig.~\ref{fig:fsc_backward_gpu_speedups}. It can be seen that with state dimensionality $D_x = 5$ the speed-up reaches $\sim$1500 with $T = 10^5$ and is still slightly increasing. With the state dimensionality $D_x = 11$ the maximum achieved speed-up is roughly $\sim$700, and with state dimensionality $D_x = 21$ the speed-up saturates to a value around $350$. However, with all of the state dimensionalities parallelisation provides a significant speed-up.

\begin{figure}[tb]
\centering
\includegraphics[width=0.9\columnwidth]{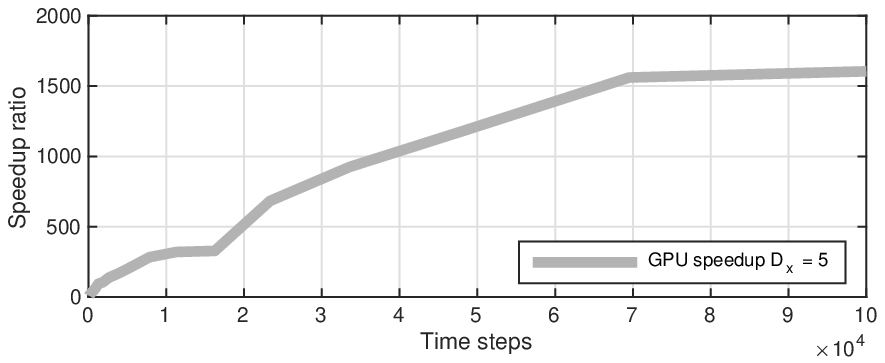}
\includegraphics[width=0.9\columnwidth]{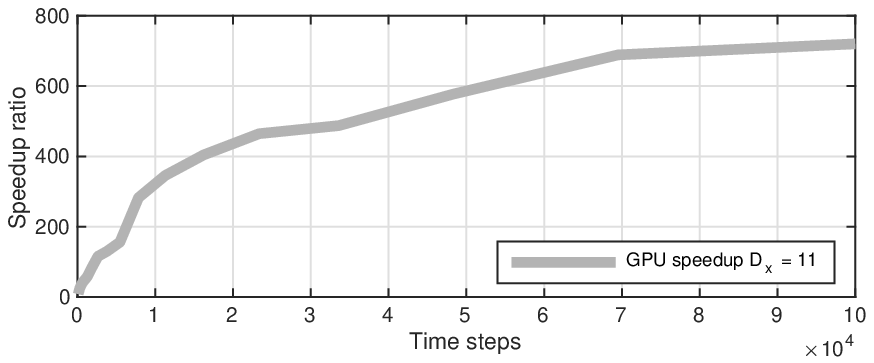}
\includegraphics[width=0.9\columnwidth]{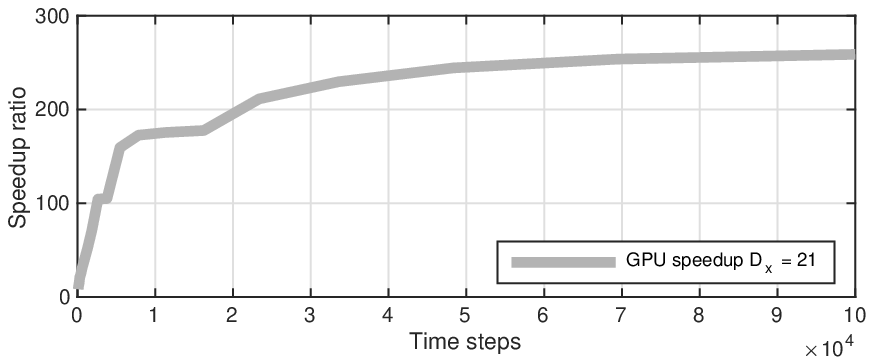}
\caption{Finite state space GPU speed-ups for control law computation (parallel vs. sequential) with state dimensions 5, 11, and 21.}
\label{fig:fsc_backward_gpu_speedups}
\end{figure}

\subsection{Experiment with nonlinear LQT}\label{sec:non_linear_LQT_exp}

This experiment is concerned with a non-linear dynamic model where we control a simple unicycle \cite[Sec. 13.2.4.1]{LaValle_book06} whose state consists of 2-D position, orientation, and speed $x = \begin{bmatrix} p_x & p_y & \theta & s  \end{bmatrix}^\top$. The aim is to steer the device to follow a given position and orientation trajectory which corresponds to going around a fixed race track multiple times. The control signal consists of the tangential acceleration and turn rate $u = \begin{bmatrix} a & \omega \end{bmatrix}^\top$. The discretized nonlinear model has the form
\begin{equation}
  x_{k+1} = f_k(x_k,u_k),
\end{equation}
where
\begin{equation}
  f_k(x,u) = \begin{bmatrix} p_x + s \, \cos(\theta) \, \Delta t_k \\
                          p_y + s \, \sin(\theta) \, \Delta t_k \\
                          \theta + \omega \, \Delta t_k \\
                          s + a \, \Delta t_k
    \end{bmatrix},
\end{equation}
and $\Delta t_k = t_{k+1} - t_k$. Fig.~\ref{fig:nlqt_trajectory} shows the trajectory and the optimal trajectory produced by the nonlinear LQT. 

\begin{figure}[tb]
\centering
\includegraphics{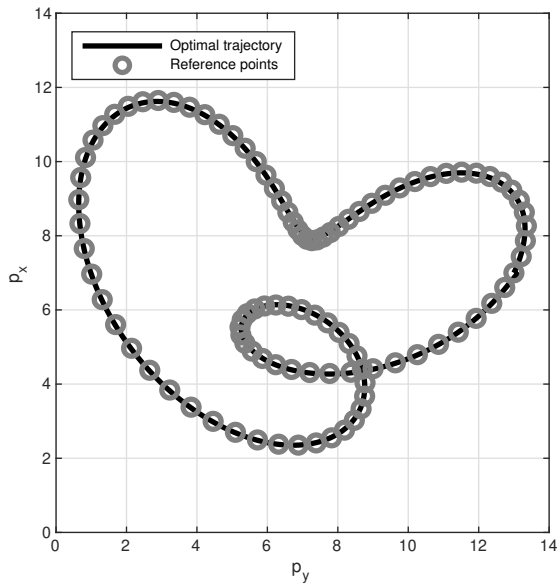}
\caption{Simulated trajectory from the nonlinear control problem and optimal trajectory produced by nonlinear LQT (see Section \ref{sec:non_linear_LQT_exp}).}
\label{fig:nlqt_trajectory}
\end{figure}

The cost function parameters were selected to be the following for $k=0,\ldots,T-1$:
\begin{equation}
  H_k = \begin{bmatrix}
    1 & 0 & 0 & 0 \\
    0 & 1 & 0 & 0 \\
    0 & 0 & 1 & 0
  \end{bmatrix}, X_k = \begin{bmatrix}
    c_k & 0 & 0 \\
    0   & c_k  & 0 \\
    0   &   0   & d_k
  \end{bmatrix}, 
  U_k = \begin{bmatrix}
    1 & 0 \\
    0 & 100
  \end{bmatrix},
\end{equation}
where $c_k = 100, d_k = 1000$, when there is a reference point at step $k$, and $10^{-6}$ otherwise. The latter values were also used for the terminal step $k = T$. The time step length was $\Delta t_k = 0.1$.

\begin{figure}[tb]
\centering
\includegraphics[width=0.49\columnwidth]{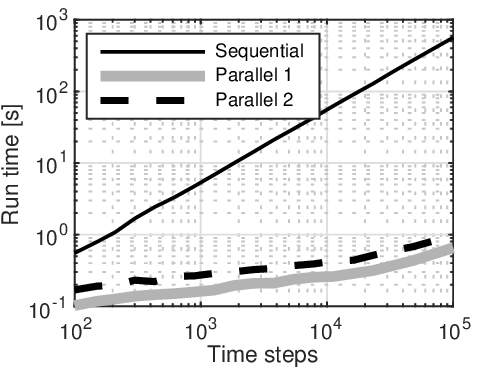}
\includegraphics[width=0.49\columnwidth]{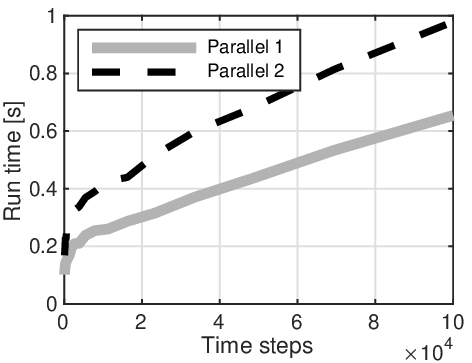}
\includegraphics[width=0.9\columnwidth]{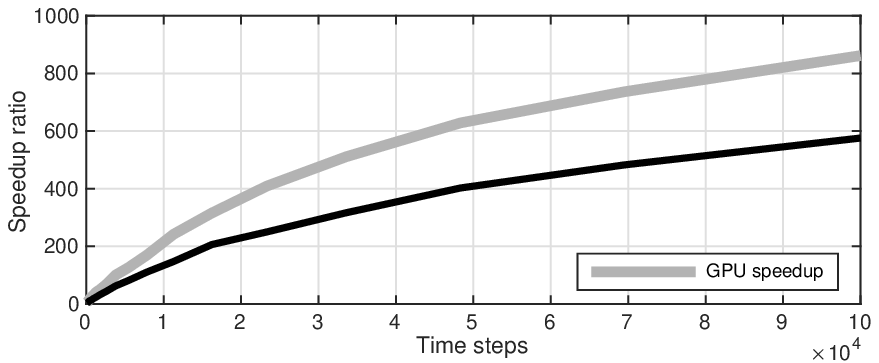}
\caption{Nonlinear LQT GPU run times and speedup for 10 iterations.}
\label{fig:nlqt_gpu}
\end{figure}

An iterated nonlinear LQT using a Taylor series approximation was applied to the model, and the number of iterations was fixed to $10$. Fig.~\ref{fig:nlqt_gpu} shows the run times for GPU. It can be seen that parallelisation provides a significant speed-up over sequential computation. When the Method~1 was used to compute the recovered trajectory at each iteration step, the speed-up grows to around $800$ for $T=10^5$ on GPU. Method~2 reaches a speed-up of around $600$.

\section{Conclusion} \label{sec:conclusion}
In this paper, we have shown how dynamic programming solutions to optimal control problems and their linear quadratic special case, the linear quadratic tracker (LQT), can be parallelised in the temporal domain by defining the corresponding associative operators and making use of parallel scans. The parallel methods have logarithmic complexity with respect to the number of time steps, which significantly reduces the linear complexity of standard (sequential) methods for long time horizon control problems. These benefits are shown via numerical experiments run on a GPU. This paper shows that the contribution is timely as it can leverage modern hardware and software for parallel computing, such as GPUs and TensorFlow.

An interesting future extension of the framework would be parallel stochastic dynamic programming solution to stochastic control problems \cite{Stengel_book94,Maybeck:1982b}. As discussed in Section~\ref{sec:extensions}, this is straightforward in the LQT case due to certainty equivalence, but the general stochastic case is not straightforward. Formally it is possible to replace the state $x_k$ with the distribution of the state $p_k$ and consider condition value functionals of the form $V_{i \to j}[p_i,p_j]$ and value functionals of the form $V_{i}[p_i]$. The present framework then, in principle, applies as such. In particular, when the distributions have finite-dimensional sufficient statistics, this can lead to tractable methods.  Unfortunately, unlike in the sequential dynamic programming case, more generally, this approach does not seem to lead to a tractable algorithm.

Another interesting extension is to consider continuous optimal control problems in which case also the stochastic control solution has certain group properties \cite{Nisio:2015} which might allow for parallelisation. However, the benefit of parallelisation in the continuous case is not as clear as in discrete-time case because of the infinite number of time steps.

\appendices

\section{Derivations and Proofs} \label{sec:Appendix}

\subsection{Proof of Theorem~\ref{the:v-comb}}

In this appendix we prove Theorem~\ref{the:v-comb}. We first prove \eqref{eq:vk-comb}. From \eqref{eq:det_problem} and \eqref{eq:Value_function}, we obtain
\begin{equation}
 \begin{split} & V_{k}(x_{k})\\
 & =\min_{u_{k:T-1}}\ell_{T}(x_{T})+\sum_{n=k}^{T-1}\ell_{n}(x_{n},u_{n})\\
 & =\min_{u_{k:T-1}}\sum_{n=k}^{i-1}\ell_{n}(x_{n},u_{n})+\sum_{n=i}^{T-1}\ell_{n}(x_{n},u_{n})+\ell_{T}(x_{T})\\
 & =\min_{u_{k:i-1}}\sum_{n=k}^{i-1}\ell_{n}(x_{n},u_{n})+\min_{u_{i:T-1}}\left[\sum_{n=i}^{T-1}\ell_{n}(x_{n},u_{n})+\ell_{T}(x_{T})\right]\\
 & =\min_{u_{k:i-1}}\sum_{n=k}^{i-1}\ell_{n}(x_{n},u_{n})+V_{i}(x_{i}),
 \end{split}
\end{equation}
where the previous minimisations are subject to the trajectory constraints in \eqref{eq:det_problem}.

We can also minimise over $x_i$ explicitly such that
\begin{align}
V_{k}(x_{k})&=\min_{x_{i}}\left[\min_{u_{k:i-1}}\left[\sum_{n=k}^{i-1}\ell_{n}(x_{n},u_{n})\right]+V_{i}(x_{i})\right]\\
&=\min_{u_{i-1}}\left[V_{k\to i}(x_{k},x_{i})+V_{i}(x_{i})\right],
\end{align}
which proves \eqref{eq:vk-comb}.

Proceeding analogously, we now prove \eqref{eq:vki-comb}. From \eqref{eq:vki-definition}, we obtain
\begin{align}
&V_{k\to i}(x_{k},x_{i}) \nonumber\\
&=\min_{u_{k:i-1}}\sum_{n=k}^{i-1}\ell_{n}(x_{n},u_{n}) \nonumber\\
&=\min_{u_{k:j-1}}\left[\sum_{n=k}^{j-1}L_{n}(x_{n},u_{n})+\min_{u_{j:i-1}}\sum_{n=j}^{i-1}\ell_{n}(x_{n},u_{n})\right] \nonumber\\
&=\min_{u_{k:j-1}}\left[\sum_{n=k}^{j-1}\ell_{n}(x_{n},u_{n})+V_{j\to i}(x_{j},x_{i})\right] \nonumber\\
&=\min_{x_{j}}\min_{u_{k:j-1}}\left[\sum_{n=k}^{j-1}\ell_{n}(x_{n},u_{n})+V_{j\to i}(x_{j},x_{i})\right] \nonumber\\
&=\min_{x_{j}}\left[\min_{u_{k:j-1}}\left(\sum_{n=k}^{j-1}\ell_{n}(x_{n},u_{n})\right)+V_{j\to i}(x_{j},x_{i})\right] \nonumber\\
&=\min_{x_{j}}\left[V_{k\to j}(x_{k},x_{j})+V_{j\to i}(x_{j},x_{i})\right],
\end{align}
which completes the proof of \eqref{eq:vki-definition}.

\subsection{Proof of LQT combination rule}\label{sec:LQT_combination_append}
In this appendix, we prove the combination rule for LQT in Lemma \ref{lem:LQT_combination}. 
Combining $V_{k\to j}(x_{k},x_{j})$ and $V_{j\to i}(x_{j},x_{i})$ of the form \eqref{eq:Vki_dual}, we obtain
\begin{align*}
& V_{k\to i}(x_{k},x_{i})\\
& =\min_{x_{j}}\left\{ \max_{\lambda_{1}}g_{k\to j}(\lambda_{1};x_{k},x_{j})+\max_{\lambda_{2}}g_{j\to i}(\lambda_{2};x_{j},x_{i})\right\} \\
& =\mathrm{z}+\max_{\lambda_{1},\lambda_{2}}\min_{x_{j}}\left\{ \frac{1}{2}x_{k}^{\top}J_{k,j}x_{k}-x_{k}^{\top}\eta_{k,j}-\frac{1}{2}\lambda_{1}^{\top}C_{k,j}\lambda_{1}\right.\\
& -\lambda_{1}^{\top}\left(x_{j}-A_{k,j}x_{k}-b_{k,j}\right)+\frac{1}{2}x_{j}^{\top}J_{j,i}x_{j}-x_{j}^{\top}\eta_{j,i}\\
& \left.-\frac{1}{2}\lambda_{2}^{\top}C_{j,i}\lambda_{2}-\lambda_{2}^{\top}\left(x_{i}-A_{j,i}x_{j}-b_{j,i}\right)\right\}.
\end{align*}
We prove the result by calculating the minimum w.r.t. $x_{j}$ and
maximum w.r.t. $\lambda_{1}$, leaving the Lagrange multiplier $\lambda_{2}$
as the Lagrange multiplier of $V_{k\to i}(x_{k},x_{i})$.

Setting the gradient w.r.t. $x_{j}$ equal to zero, we obtain
\begin{align}
J_{j,i}x_{j} & =\lambda_{1}+\eta_{j,i}-A_{j,i}^{\top}\lambda_{2}\label{eq:derivative_x_j},
\end{align}
where $J_{j,i}$ is not invertible in general.

Setting the gradient w.r.t $\lambda_{1}$ equal to zero, we have
\begin{align}
x_{j} & =-C_{k,j}\lambda_{1}+A_{k,j}x_{k}+b_{k,j}\label{eq:x_j_prev}.
\end{align}
Then, substituting (\ref{eq:x_j_prev}) into (\ref{eq:derivative_x_j})
yields
\begin{align}
\lambda_{1} & =\left(I+J_{j,i}C_{k,j}\right)^{-1}\left[-\eta_{j,i}+A_{j,i}^{\top}\lambda_{2}+J_{j,i}\left(A_{k,j}x_{k}+b_{k,j}\right)\right].\label{eq:labmda_1}
\end{align}
Substituting (\ref{eq:labmda_1}) into (\ref{eq:x_j_prev}), we obtain
\begin{align}
x_{j} & =-C_{k,j}\left(I+J_{j,i}C_{k,j}\right)^{-1}A_{j,i}^{\top}\lambda_{2}\nonumber \\
& -C_{k,j}\left(I+J_{j,i}C_{k,j}\right)^{-1}\left[-\eta_{j,i}+J_{j,i}\left(A_{k,j}x_{k}+b_{k,j}\right)\right]\nonumber \\
& +A_{k,j}x_{k}+b_{k,j}\label{eq:x_j}.
\end{align}

We now substitute the stationary points (\ref{eq:labmda_1}) and (\ref{eq:x_j})
in each of the terms in $V_{k\to i}(x_{k},x_{i})$ to recover a function of the form \eqref{eq:Vki_dual}. This step involves the use of long mathematical expressions so it is left out of the paper. Lemma \ref{lem:LQT_combination} then follows by term identification. 

\subsection{Proof of Parallel LQT} \label{sec:Proof_parallel_LQT_append}
In this section, we prove Lemma \ref{lem:LQT_assoc}. 

\subsubsection{Proof of \eqref{eq:LQT_lem_V_S_k}}
We first show the form of $V_{k\to k+1}(x_{k},x_{k+1})$ in its dual
representation for $k=S,\ldots,T$. Using \eqref{eq:vki-definition} and the LQT problem formulation in \eqref{eq:LQT_problem}, we obtain
\begin{align}
& V_{k\to k+1}(x_{k},x_{k+1})\nonumber \\
& =\min_{u_{k}}\ell_{k}(x_{k},u_{k})\nonumber \\
& =\min_{u_{k}}\frac{1}{2}(H_{k}x_{k}-r_{k})^{\top}X_{k}(H_{k}x_{k}-r_{k})+\frac{1}{2}u_{k}^{\top}U_{k}u_{k}\nonumber \\
& =\mathrm{z}+\min_{u_{k}}\frac{1}{2}x_{k}^{\top}H_{k}^{\top}X_{k}H_{k}x_{k}-x_{k}^{\top}H_{k}^{\top}X_{k}r_{k}+\frac{1}{2}u_{k}^{\top}U_{k}u_{k}
\end{align}
subject to 
\begin{align}
x_{k+1} & =F_{k}x_{k}+c_{k}+L_{k}u_{k}\label{eq:LQT_parallel_proof_constraint-1}.
\end{align}

The Lagrangian of $V_{k\to k+1}(x_{k},x_{k+1})$ is \cite{Boyd_book04} 
\begin{align*}
& L_{k\to k+1}\left(u_{k},\lambda;x_{k},x_{k+1}\right)\\
& =\mathrm{z}+\frac{1}{2}x_{k}^{\top}H_{k}^{\top}X_{k}H_{k}x_{k}-x_{k}^{\top}H_{k}^{\top}X_{k}r_{k}\\
& +\frac{1}{2}u_{k}^{\top}U_{k}u_{k}+\lambda^{\top}\left(L_{k}u_{k}-\left(x_{k+1}-F_{k}x_{k}-c_{k}\right)\right)
\end{align*}

and the dual function is \cite{Boyd_book04}
\begin{align}
& g_{k\to k+1}\left(\lambda;x_{k},x_{k+1}\right)\nonumber\\
& =\min_{u_{k}}L_{k\to k+1}\left(u_{k},\lambda;x_{k},x_{k+1}\right)\nonumber\\
& =\mathrm{z}+\frac{1}{2}x_{k}^{\top}H_{k}^{\top}X_{k}H_{k}x_{k}-x_{k}^{\top}H_{k}^{\top}X_{k}r_{k}\nonumber\\
& \quad-\frac{1}{2}\lambda^{\top}L_{k}U_{k}^{-1}L_{k}^{\top}\lambda-\lambda^{\top}\left(x_{k+1}-F_{k}x_{k}-c_{k}\right),\label{eq:g_k_1_append}
\end{align}
where the minimum is obtained setting the gradient of $L_{k\to k+1}\left(\cdot\right)$
w.r.t. $u_{k}$ equal to zero, which gives
\begin{align}
u_{k} & =-U_{k}^{-1}L_{k}\lambda.
\end{align}
Comparing \eqref{eq:g_k_1_append} with \eqref{eq:g_ki} proves the initialisation in \eqref{eq:lqt_init}. Then, by applying Theorem \ref{the:v-comb}, which is equivalent to Lemma \ref{lem:LQT_combination} in the LQT setting, we complete the proof of \eqref{eq:LQT_lem_V_S_k}.

\subsubsection{Proof of \eqref{eq:LQT_lem_V_k}}
We use induction backwards to prove \eqref{eq:LQT_lem_V_k}. At the last time step, from Lemma \ref{lem:LQT_assoc}, we have
\begin{align*}
& V_{T\to T+1}(x_{T},x_{T+1})\\
& =\max_{\lambda}\left[\mathrm{z}+\frac{1}{2}x_{T}^{\top}H_{T}^{\top}X_{T}H_{T}x_{T}-x_{T}^{\top}H_{T}^{\top}r_{T}-\lambda^{\top}x_{T+1}\right].
\end{align*}
For $x_{T+1}\neq0$, this function is infinite. For $x_{T+1}=0$,
we have
\begin{align*}
V_{T\to T+1}(x_{T},0) & =\frac{1}{2}x_{T}^{\top}H_{T}^{\top}X_{T}H_{T}x_{T}-x_{T}^{\top}H_{T}^{\top}r_{T}
\end{align*}
which coincides with $V_{T}(x_{T})$, see Section \ref{subsec:LQT}.

We now assume that \eqref{eq:LQT_lem_V_k} holds for $k+1$, which implies that we have
\begin{align}
A_{k+1,T+1}&=0,\nonumber\\
b_{k+1,T+1}&=0,\nonumber\\
C_{k+1,T+1}&=0,\nonumber\\
\eta_{k+1,T+1}&=v_{k+1},\nonumber\\
J_{k+1,T+1}&=S_{k+1}, \nonumber
\end{align}
where $v_{k+1}$ and $S_{k+1}$ are the parameters in \eqref{eq:V_k_LQT}, and then show that \eqref{eq:LQT_lem_V_k} holds for $k$. From Lemma \ref{lem:LQT_assoc}, we have
\begin{align}
A_{k,k+1}&=F_{k},\nonumber\\
b_{k,k+1}&=c_{k},\nonumber\\
C_{k,k+1}&=L_{k}U_{k}^{-1}L_{k}^{\top},\nonumber\\
\eta_{k,k+1}&=H_{k}^{\top}X_{k}r_{k},\nonumber\\
J_{k,k+1}&=H_{k}^{\top}X_{k}H_{k}.\nonumber
\end{align}
By applying the combination rules in Lemma \ref{lem:LQT_combination}, we obtain
\begin{align}
A_{k,T+1}&=0,\nonumber\\
b_{k,T+1}&=0,\nonumber\\
C_{k,T+1}&=0, \nonumber\\
\eta_{k,T+1}&=F_{k}^{\top}(I+S_{k+1}L_{k}U_{k}^{-1}L_{k}^{\top})^{-1}(v_{k+1}-S_{k+1}c_{k})\nonumber\\
&\quad+H_{k}^{\top}X_{k}r_{k},\nonumber\\
J_{k,T+1}&=F_{k}^{\top}(I+S_{k+1}L_{k}U_{k}^{-1}L_{k}^{\top})^{-1}S_{k+1}F_{k}+H_{k}^{\top}X_{k}H_{k}.
\end{align}
We need to prove that these equations are equivalent to \eqref{eq:v_k_recursion} and \eqref{eq:S_k_recursion}. We first prove that $\eta_{k,T+1}=v_{k}$, which requires proving that the following identity holds
\begin{align}
F_{k}^{\top}(I+S_{k+1}L_{k}U_{k}^{-1}L_{k}^{\top})^{-1}&=\left(F_{k}^{\top}-L_{k}K_{k}\right)^{\top}.
\end{align}
On one hand, the right hand side can be written as
\begin{align}
\left(F_{k}^{\top}-L_{k}K_{k}\right)^{\top}&=F_{k}-F_{k}^{\top}S_{k+1}L_{k}\left(L_{k}^{\top}S_{k+1}L_{k}+U_{k}\right)^{-1}L_{k}^{\top}.
\end{align}
On the other hand, by applying the matrix inversion lemma, the left-hand side becomes
\begin{align}
&F_{k}^{\top}(I+S_{k+1}L_{k}U_{k}^{-1}L_{k}^{\top})^{-1} \nonumber\\
&=F_{k}^{\top}\left(I-S_{k+1}L_{k}\left(U_{k}+L_{k}^{\top}S_{k+1}L_{k}\right)^{-1}L_{k}^{\top}\right), \label{eq:proof_parallel_LQT_mu}
\end{align}
which proves $\eta_{k,T+1}=v_{k}$.

To prove that $J_{k,T+1}=S_{k}$, we need to prove that 
\begin{align}
F_{k}^{\top}(I+S_{k+1}L_{k}U_{k}^{-1}L_{k}^{\top})^{-1}S_{k+1}F_{k}&=F_{k}^{\top}S_{k+1}(F_{k}-L_{k}K_{k}).
\end{align}
On one hand, the right-hand side can be written as
\begin{align}
&F_{k}^{\top}S_{k+1}(F_{k}-L_{k}K_{k}) \nonumber\\
&=F_{k}^{\top}S_{k+1}\left(F_{k}-L_{k}\left(L_{k}^{\top}S_{k+1}L_{k}+U_{k}\right)^{-1}L_{k}^{\top}S_{k+1}F_{k}\right) \nonumber\\
&=F_{k}^{\top}S_{k+1}F_{k}-F_{k}^{\top}S_{k+1}L_{k}\left(L_{k}^{\top}S_{k+1}L_{k}+U_{k}\right)^{-1}L_{k}^{\top}S_{k+1}F_{k}.
\end{align}
On the other hand, using \eqref{eq:proof_parallel_LQT_mu}, the left-hand side is
\begin{align}
&F_{k}^{\top}(I+S_{k+1}L_{k}U_{k}^{-1}L_{k}^{\top})^{-1}S_{k+1}F_{k}\nonumber\\
&=F_{k}^{\top}\left(I-S_{k+1}L_{k}\left(U_{k}+L_{k}^{\top}S_{k+1}L_{k}\right)^{-1}L_{k}^{\top}\right)S_{k+1}F_{k}\nonumber\\
&=F_{k}^{\top}S_{k+1}F_{k}-F_{k}^{\top}S_{k+1}L_{k}\left(U_{k}+L_{k}^{\top}S_{k+1}L_{k}\right)^{-1}L_{k}^{\top}S_{k+1}F_{k},
\end{align}
which proves the result.

\subsection{Proof of optimal trajectory recovery} \label{app:lqt_traj}
In this appendix, we prove Lemma \ref{lem:LQT_trajectory_method2}. Substituting $V_{k}(x_{k})$ of the form \eqref{eq:V_k_LQT} and $V_{S\to k}(x_{S},x_{k})$  of the form \eqref{eq:Vki_dual} into \eqref{eq:trajectory_alternative}, we obtain 
\begin{align}
x_{k}^{*} & =\arg\min_{x_{k}}\max_{\lambda}\frac{1}{2}x_{S}^{\top}J_{S,k}x_{S}-x_{S}^{\top}\eta_{S,k}\nonumber \\
& \quad-\frac{1}{2}\lambda^{\top}C_{S,k}\lambda-\lambda^{\top}\left(x_{k}-A_{S,k}x_{S}-b_{S,k}\right)\nonumber \\
& \quad+\frac{1}{2}x_{k}^{\top}S_{k}x_{k}-v_{k}^{\top}x_{k}.\label{eq:x_k_opt_append}
\end{align}

The minimum of \eqref{eq:x_k_opt_append} w.r.t. $x_{k}$ can be found by setting the gradient of
the function equal to zero, which yields
\begin{align}
x_{k} & =S_{k}^{-1}\left(\lambda+v_{k}\right).\label{eq:x_k_trajectory_prev}
\end{align}

We substitute (\ref{eq:x_k_trajectory_prev}) into the function (without
argmin) in (\ref{eq:x_k_opt_append}) to obtain
\begin{align*}
& \max_{\lambda}\frac{1}{2}x_{S}^{\top}J_{S,k}x_{S}-x_{S}^{\top}\eta_{S,k}-\frac{1}{2}\lambda^{\top}\left(C_{S,k}+S_{k}^{-1}\right)\lambda\\
& +v_{k}^{\top}S_{k}^{-1}v_{k}-\lambda^{\top}\left(S_{k}^{-1}v_{k}-A_{S,k}x_{S}-b_{S,k}\right).
\end{align*}
Making the gradient of this function w.r.t. $\lambda$ equal to zero,
we obtain that the maximum is obtained for
\begin{align}
\lambda & =\left(C_{S,k}+S_{k}^{-1}\right)^{-1}\left(-S_{k}^{-1}v_{k}+A_{S,k}x_{S}+b_{S,k}\right).\label{eq:lamda_trajectory_k}
\end{align}
Substituting (\ref{eq:lamda_trajectory_k}) into (\ref{eq:x_k_trajectory_prev}),
we obtain \eqref{eq:lqt_method_2}, which finishes the proof of Lemma \ref{lem:LQT_trajectory_method2}.

\section*{Acknowledgment}

The authors would like to thank Adrien Corenflos for help in TensorFlow programming.

\bibliographystyle{IEEEtran}
\bibliography{IEEEabrv,dynprog}

\begin{IEEEbiography}[{\includegraphics[width=1in,height=1.25in,clip,keepaspectratio]{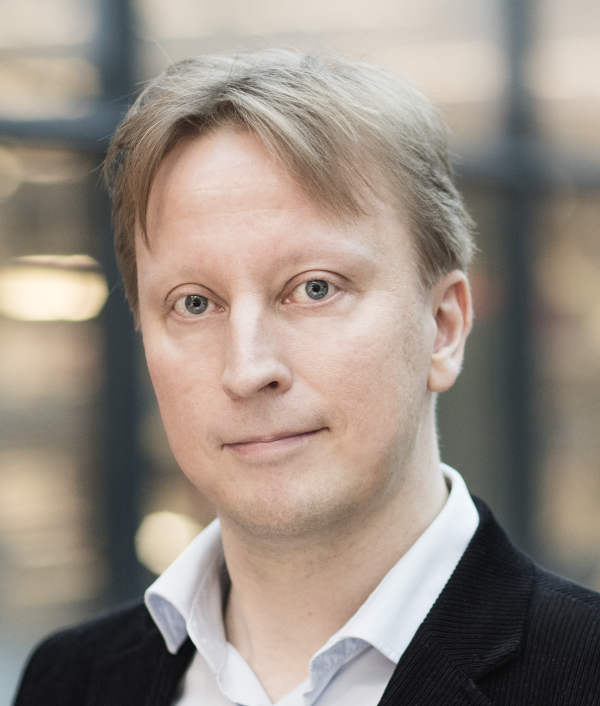}}]{Simo S\"arkk\"a} received his Master of Science (Tech.) degree (with distinction) in engineering physics and mathematics, and Doctor of Science (Tech.) degree (with distinction) in electrical and communications engineering from Helsinki University of Technology, Espoo, Finland, in 2000 and 2006, respectively. Currently, Dr. S\"arkk\"a is an Associate Professor with Aalto University and an Adjunct Professor with Tampere University of Technology and Lappeenranta University of Technology. His research interests are in multi-sensor data processing systems with applications in location sensing, health and medical technology, machine learning, inverse problems, and brain imaging. He has authored or coauthored over 150 peer-reviewed scientific articles and his books "Bayesian Filtering and Smoothing" and "Applied Stochastic Differential Equations" along with the Chinese translation of the former were recently published via the Cambridge University Press. He is a Senior Member of IEEE and serving as an Senior Area Editor of IEEE Signal Processing Letters.  \end{IEEEbiography}

\begin{IEEEbiography}[{\includegraphics[width=1in,height=1.25in,clip,keepaspectratio]{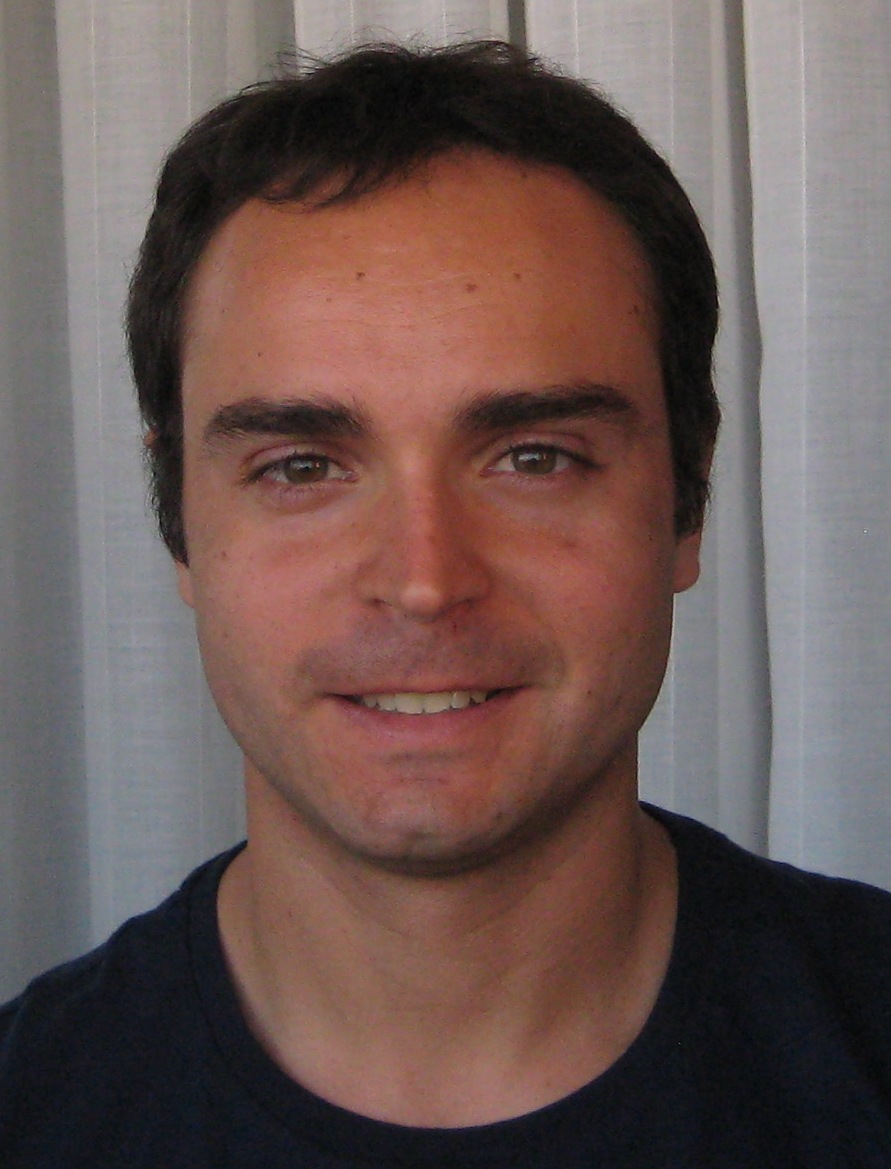}}]{\'Angel F. Garc\'ia-Fern\'andez} received the telecommunication engineering degree (with honours) and the Ph.D. degree from Universidad Polit\'ecnica de Madrid, Madrid, Spain, in 2007 and 2011, respectively. \\ 
	He is currently a Lecturer in the Department of Electrical Engineering and Electronics at the University of Liverpool, Liverpool, UK. He previously held postdoctoral positions at Universidad Polit\'ecnica de Madrid, Chalmers University of Technology, Gothenburg, Sweden, Curtin University, Perth, Australia, and  Aalto University, Espoo, Finland. His main research activities and interests are in the area of Bayesian  estimation, with emphasis on dynamic systems and multiple target tracking. 
	He was the recipient of paper awards at the International Conference on Information Fusion in 2017, 2019 and 2021. \end{IEEEbiography}

\vfill

\end{document}